\theoremstyle{plain}
\newtheorem{theorem}{Theorem}[section]
\newtheorem{lemma}[theorem]{Lemma}
\newtheorem{corollary}[theorem]{Corollary}
\newtheorem{definition}[theorem]{Definition}
\theoremstyle{definition}
\newtheorem{example}[theorem]{Example}
\let\@fnsymbol\@arabic
\newcommand{\nop}[1]{}
\newcommand{\xbar}{{\overline x}}
\newcommand{\bbr}{\ensuremath{\mathbb{R}}}
\newcommand{\n} [1]{\ensuremath{\left[#1\right]^-}}
\newcommand{\ve}[1]{{ \bf #1}}
\begin{document}

\title{
  Quadratization of Symmetric Pseudo-Boolean Functions
}

\author{
  Martin Anthony\thanks{Department of Mathematics, London School of Economics, UK. \emph{E-mail:} m.anthony@lse.ac.uk}
  \qquad
  Endre Boros\thanks{MSIS and RUTCOR, Rutgers University, NJ, USA. \emph{E-mail:} endre.boros@rutgers.edu}
  \qquad
  Yves Crama\thanks{QuantOM, HEC Management School, University of Liege, Belgium. \emph{E-mail:} yves.crama@ulg.ac.be}
  \qquad
  Aritanan Gruber\thanks{RUTCOR, Rutgers University, NJ, USA. \emph{E-mail:} aritanan.gruber@rutgers.edu}
}

%\date{\today}
\maketitle

\begin{abstract}
  A {\em pseudo-Boolean function} is a real-valued function $f(x)=f(x_1,x_2,\ldots,x_n)$ of $n$ binary variables; 
  that is, a mapping from $\{0,1\}^n$ to ${\bbr}$. For a pseudo-Boolean function $f(x)$ on $\{0,1\}^n$, we say that 
  $g(x,y)$ is a \emph{quadratization} of $f$ if $g(x,y)$ is a quadratic polynomial depending on $x$ and on $m$ 
  \emph{auxiliary} binary variables $y_1,y_2,\ldots,y_m$ such that $f(x)= \min \{ g(x,y) : y \in \{0,1\}^m \}$ for 
  all $x \in \{0,1\}^n$. By means of quadratizations, minimization of $f$ is reduced to minimization (over its 
  extended set of variables) of the quadratic function $g(x,y)$. This is of some practical interest because 
  minimization of quadratic functions has been thoroughly studied for the last few decades, and much progress has 
  been made in solving such problems exactly or heuristically. A related paper~\cite{ABCG} initiated a systematic 
  study of the minimum number of auxiliary $y$-variables required in a quadratization of an arbitrary function $f$ 
  (a natural question, since the complexity of  minimizing the quadratic function $g(x,y)$ depends, among other 
  factors, on the number of binary variables). In this paper, we determine more precisely the number of auxiliary 
  variables required by quadratizations of \emph{symmetric} pseudo-Boolean functions $f(x)$, those functions whose 
  value depends only on the Hamming weight of the input $x$ (the number of variables equal to $1$).
  \\[4mm]
  \textbf{Keywords.} 
    Boolean and Pseudo-Boolean Functions $\cdot$ 
    Symmetric functions $\cdot$
    Nonlinear and Quadratic binary optimization $\cdot$
    Reformulation methods for polynomials.
  \\[4mm]
  \textbf{Mathematics Subject Classification (2000).} 
    {06E30 $\cdot$ 90C09 $\cdot$ 90C20}
\end{abstract}
 
%%%%%%%%
\section{Quadratizations of pseudo-Boolean functions}
A {\em pseudo-Boolean function}  is a real-valued function $f(x)=f(x_1,x_2,\ldots,x_n)$ of $n$ binary 
variables, a mapping from $\{0,1\}^n$ to ${\bbr}$. It is well-known that every pseudo-Boolean function can 
be uniquely represented as a multilinear polynomial in its variables. \emph{Nonlinear binary optimization 
problems}, or \emph{pseudo-Boolean optimization} (PBO) \emph{problems}, of the form
\begin{equation*}\label{eq:PBO}  
  \min \bigl\{ f(x) : x \in \{0,1\}^n \bigr\},
\end{equation*}
where $f(x)$ is a pseudo-Boolean function, have attracted the attention of numerous researchers, and 
they are notoriously difficult, as they naturally encompass a broad variety of models such as maximum 
satisfiability, maximum cut, graph coloring, simple plant location, and so on; see, e.g.,~\cite{CH2011}. 
In recent years, several authors have revisited an approach initially proposed by Rosenberg~\cite{Rosenberg75}. 
This involves reducing PBO to its quadratic case (QPBO) by relying on the following concept.

\begin{definition}\label{def:quad} 
  For a pseudo-Boolean function $f(x)$ on $\{0,1\}^n$, we say that $g(x,y)$ is a \emph{quadratization} of $f$
  if $g(x,y)$ is a quadratic polynomial depending on $x$ and on $m$ \emph{auxiliary} binary variables 
  $y_1,y_2,\ldots,y_m,$ such that
  \begin{equation*}\label{eq:quad}
    f(x)= \min \bigl\{ g(x,y) : y \in \{0,1\}^m \bigr\} \quad \mbox{for all } x \in \{0,1\}^n.
  \end{equation*}
\end{definition}

Clearly, if $g(x,y)$ is a quadratization  of $f$, then
\[
  \min \bigl\{ f(x) : x \in \{0,1\}^n \bigr\} = \min \bigl\{ g(x,y) : x \in \{0,1\}^n, y \in \{0,1\}^m \bigr\},
\]
so that the minimization of $f$ is reduced through this transformation to the QPBO problem of minimizing 
$g(x,y)$. We are also interested (see~\cite{ABCG}) in special types of quadratizations, which we call  
\emph{$y$-linear quadratizations}, which contain no products of auxiliary variables. If $g(x,y)$ is a 
$y$-linear quadratization, then $g$ can be written as 
\[
  g(x,y)=q(x) + \sum_{i=1}^m a_i(x) y_i,
\]
where $q(x)$ is quadratic in $x$ and each $a_i(x)$ is a linear function of $x$. When minimizing $g$ over 
$y$, each product $a_i(x)y_i$ takes the value $\min\{0,a_i(x)\}$. Thus, $y$-linear quadratizations can be 
viewed as piecewise linear functions of the $x$-variables.

\begin{example} 
  As an easy explicit example, consider the {\em negative monomial} 
  \[
    -\prod_{i=1}^n x_i=-x_1x_2\cdots x_n.
  \]
  This elementary pseudo-Boolean function has the following \emph{standard quadratization} (Freedman and 
  Drineas~\cite{FreedDrineas2005}): 
  \[
    s_n(x_1,x_2,\ldots, x_n,y) = y\left(n-1-\sum_{i=1}^n x_i\right).
  \]
  The reason is as follows: unless all the $x_i$ are $1$, then the quantity in parentheses in the expression 
  for $s_n$ is non-negative and the minimum value of $s_n$ is therefore $0$, obtained when $y=0$; and, if all 
  $x_i$ are $1$, the expression equals $-y$, minimized when $y=1$, giving value $-1$. In both cases, the minimum 
  value of $s_n$ is the same as the value of the negative monomial. 
\end{example}

\begin{example} 
  The {\em positive monomial} is the function
  \[
    \prod_{i=1}^n x_i=x_1x_2\cdots x_n.
  \]
  Ishikawa~\cite{Ishikawa2011} showed that it can be quadratized using $\left\lfloor \frac{n-1}{2}\right\rfloor$ 
  auxiliary variables, and this is currently the best available bound for positive monomials; see 
  also~\cite{FGBZ2011,Ishikawa2011}. 
\end{example}

Rosenberg~\cite{Rosenberg75} has proved that every pseudo-Boolean function $f(x)$ has a quadratization, 
and that a quadratization can be efficiently computed from the polynomial expression of $f$. This also easily 
follows from our foregoing observations that every monomial has a quadratization. (It is also the case --- 
see~\cite{ABCG} --- that any pseudo-Boolean function has a $y$-linear quadratization.)  Of course, quadratic 
PBO problems remain difficult in general, but this special class of problems has been thoroughly studied for 
the last few decades, and much progress has been made in solving large instances of QPBO, either exactly or 
heuristically. Quadratization has emerged in recent years as one of the most successful approach to the solution 
of very large-scale PBO problems arising in computer vision applications. (See, for instance, Boykov, Veksler 
and Zabih \cite{BVZ01}, Kolmogorov and Rother~\cite{KR2007}, Kolmogorov and Zabik~\cite{KZ2004}, Rother, 
Kolmogorov, Lempitsky and Szummer~\cite{RKLS2007}, Boros and Gruber~\cite{BG2011}, Fix, Gruber, Boros and 
Zabih~\cite{FGBZ2011}, Freedman and Drineas~\cite{FreedDrineas2005}, Ishikawa~\cite{Ishikawa2011}, Ramalingam, 
Russell, Ladick\'{y} and Torr~\cite{RRLT2011}, Rother, Kohli, Feng and Jia~\cite{RKFJ}.)

In a related paper, the present authors~\cite{ABCG} initiated a systematic study of quadratizations of 
pseudo-Boolean functions. We investigated the minimum number of auxiliary $y$-variables required in a 
quadratization of an arbitrary pseudo-Boolean function. In this paper, our focus is on {\em symmetric} 
pseudo-Boolean functions. A symmetric pseudo-Boolean function is one in which the value of the function 
depends only on the weight of the input. More precisely, a pseudo-Boolean function $f: \{0,1\}^n \to \bbr$ 
is {\em symmetric} if there is a function $k: \{0,1,\ldots, n\} \to \bbr$ such that $f(x)=k(l)=k_l$ where 
$l=|x|=\sum_{j=1}^n x_j$ is the Hamming weight (number of ones) of $x$. In another way, $f$ is symmetric 
if it is invariant under any permutation of the coordinates $\{1,2,\ldots,n\}$ of its variables. Note, 
for instance, that the positive and negative monomials are symmetric. Here, we investigate the number of 
auxiliary variables required in quadratizations of such functions. 

\subsection{Outline} 
In Section~\ref{sec:rep}, we present a representation theorem and corollaries, which provide useful ways 
of expressing symmetric pseudo-Boolean functions. In Section~\ref{sec:reptoquad} we explain how we can use 
such representations to construct quadratizations, and we present the implications for upper bounds on the 
number of auxiliary variables in Section~\ref{sec:numberaux}. Section~\ref{sec:lower} presents two types of 
lower bounds on the number of auxiliary variables for quadratizations of symmetric functions: an existence 
result, establishing that there are symmetric functions needing a rather large number of auxiliary variables; 
and a concrete lower bound on the number of auxiliary variables in any $y$-linear quadratization of the parity 
function. 

%%%%%%%%
\section{A representation theorem}\label{sec:rep}
We introduce a useful piece of notation: for any real number $a$, $\n{a}$ denotes $\min(a,0)$, the smaller 
of $a$ and $0$. In this section, we present a result that will be key in our approach to obtaining quadratizations. 
This is a `representation theorem' that expresses a symmetric pseudo-Boolean function on variables $x_1, x_2, \ldots, x_n$ 
as a linear combination of terms of the form $\n{a-\sum_{r=1}^n x_r}$, for a suitable range of values $a$. 

Our main result, in its most general form, is as follows. 

\begin{theorem} \label{thm:repepsilongeneral} 
  Let  $0<\epsilon_i \leq 1$, for $i=0,1,\ldots,n$. Then every symmetric pseudo-Boolean function $f: \{0,1\}^n\to \bbr$ 
  can be represented uniquely in the form
  \begin{equation*}\label{eq:symrepresent}
    f(x)=\sum_{i=0}^{n}\alpha_i \n{i-\epsilon_i-\sum_{r=1}^n x_r}. 
  \end{equation*}
\end{theorem}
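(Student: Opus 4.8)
The plan is to exploit the fact that a symmetric function is completely determined by its $n+1$ values $k_0, k_1, \ldots, k_n$ on the possible Hamming weights, and that each proposed building block $\n{i-\epsilon_i-\sum_{r=1}^n x_r}$ is itself symmetric, depending on $x$ only through the weight $l=\sum_{r=1}^n x_r$. Consequently the claimed identity, which a priori must hold for all $x\in\{0,1\}^n$, is equivalent to the finite system of $n+1$ scalar equations obtained by letting $l$ range over $\{0,1,\ldots,n\}$:
\[
  k_l=\sum_{i=0}^{n}\alpha_i\,\n{i-\epsilon_i-l}, \qquad l=0,1,\ldots,n.
\]
Existence and uniqueness of the representation are then exactly equivalent to nonsingularity of the $(n+1)\times(n+1)$ coefficient matrix $M=\bigl(\n{i-\epsilon_i-l}\bigr)_{l,i}$.

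First I would evaluate the entries $\n{i-\epsilon_i-l}=\min(i-\epsilon_i-l,0)$ explicitly. Since $i$ and $l$ are integers and $0<\epsilon_i\leq 1$, three regimes arise: if $l\leq i-1$ then $i-\epsilon_i-l\geq 1-\epsilon_i\geq 0$, so the entry is $0$; if $l=i$ then the entry is $\n{-\epsilon_i}=-\epsilon_i$; and if $l\geq i+1$ then $i-\epsilon_i-l\leq -1-\epsilon_i<0$, so the entry equals the negative quantity $i-\epsilon_i-l$. The crucial consequence is that $\n{i-\epsilon_i-l}=0$ whenever $i>l$, while the diagonal entry (where $i=l$) equals $-\epsilon_i\neq 0$.

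This shows that, with respect to the natural ordering of weights and indices, $M$ is lower triangular with all diagonal entries equal to $-\epsilon_l\neq 0$, so $\det M=\prod_{i=0}^{n}(-\epsilon_i)=(-1)^{n+1}\prod_{i=0}^n\epsilon_i\neq 0$. Hence $M$ is invertible and the system has a unique solution $(\alpha_0,\ldots,\alpha_n)$ for every right-hand side $(k_0,\ldots,k_n)$, which is precisely the asserted existence and uniqueness. Equivalently, one can read off the coefficients recursively: the equation for $l=0$ fixes $\alpha_0=-k_0/\epsilon_0$, and the equation for weight $l$ determines $\alpha_l$ (with nonzero coefficient $-\epsilon_l$) in terms of the already-known $\alpha_0,\ldots,\alpha_{l-1}$.

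The only point requiring genuine care is the triangularity, which hinges precisely on the hypothesis $0<\epsilon_i\leq 1$: the upper bound $\epsilon_i\leq 1$ is exactly what forces every strictly-upper entry ($i>l$) to vanish, and the lower bound $\epsilon_i>0$ is exactly what keeps the diagonal entries nonzero. Once this case analysis is in place, the remainder is routine linear algebra, so I do not anticipate any substantive obstacle beyond verifying the boundary behavior of $\n{\cdot}$ at $l=i$ and $l=i-1$.
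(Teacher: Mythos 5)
Your proposal is correct and follows essentially the same route as the paper: both reduce the identity to the $(n+1)\times(n+1)$ linear system indexed by Hamming weights and observe that the condition $0<\epsilon_i\leq 1$ makes the coefficient matrix lower triangular with nonzero diagonal entries $-\epsilon_i$, hence invertible. Your explicit case analysis of the entries is just a slightly more detailed write-up of the same argument.
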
 

When all the $\epsilon_i$ are equal, we can be more explicit about the coefficients in this representation.
Recall that $k_l=k(l)$ is the value of $f(x)$ in any point $x$ with Hamming weight equal to $l$. We set
$k_{-1}=0$ by convention.

\begin{theorem}\label{thm:repepsilon}
  Let $0<\epsilon \le 1$. Then every symmetric pseudo-Boolean function
  $f: \{0,1\}^n\to \bbr$ can be represented uniquely in the form
  \begin{equation*}\label{eq:symrepresent2}
    f(x)=\sum_{i=0}^{n}\alpha_i \n{i-\epsilon-\sum_{r=1}^n x_r}   
  \end{equation*}
  where, for $j=0,1,\ldots, n$, the value of $\alpha_j$ is
  \begin{equation}\label{eq:alpha}
    \alpha_j=- \sum_{i=0}^{j-2} \frac{(\epsilon-1)^{j-i-2}}{\epsilon^{j-i+1}}  k_i
             + \left(\frac{1}{\epsilon} +\frac{1}{\epsilon^2}\right) k_{j-1} -\frac{1}{\epsilon} k_j.
  \end{equation}
\end{theorem}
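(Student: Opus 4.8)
The plan is to reduce the claimed identity to a finite triangular linear system indexed by Hamming weight, solve that system by a difference calculation, and match the result to~\eqref{eq:alpha}. Since $f$ and every term $\n{i-\epsilon-\sum_r x_r}$ are symmetric, it suffices to verify the representation at each Hamming weight $l\in\{0,1,\ldots,n\}$. Evaluating a single term at weight $l$ and using that $0<\epsilon\le 1$ together with the integrality of $i$ and $l$, I get $\n{i-\epsilon-l}=i-\epsilon-l$ for $i\le l$ and $\n{i-\epsilon-l}=0$ for $i\ge l+1$. Hence the representation at weight $l$ becomes
\[
  k_l=\sum_{i=0}^{l}\alpha_i\,(i-\epsilon-l),\qquad l=0,1,\ldots,n.
\]
This is lower triangular in $\alpha_0,\ldots,\alpha_n$ with all diagonal entries equal to $-\epsilon\neq 0$, so it has a unique solution; existence and uniqueness of the representation thus follow at once (and are also the special case of Theorem~\ref{thm:repepsilongeneral} with all $\epsilon_i=\epsilon$). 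What remains is to show that this unique solution is given by~\eqref{eq:alpha}.

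To get a usable recurrence, I would take differences of consecutive equations. Subtracting the weight-$(l-1)$ equation from the weight-$l$ equation gives $k_l-k_{l-1}=-\epsilon\,\alpha_l-\sum_{i=0}^{l-1}\alpha_i$, and taking one more difference cancels the running sum and leaves the first-order recurrence
\[
  \epsilon\,\alpha_l=(\epsilon-1)\,\alpha_{l-1}-\bigl(k_l-2k_{l-1}+k_{l-2}\bigr),
\]
valid for all $l\ge 1$ under the convention $k_{-1}=0$; the weight-$0$ equation supplies the base value $\alpha_0=-k_0/\epsilon$.

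Finally I would prove~\eqref{eq:alpha} by induction on $j$. The cases $j=0,1$ come directly from the base value and a single use of the recurrence, the empty sums in~\eqref{eq:alpha} being consistent with $k_{-1}=0$. For the step, I substitute the formula for $\alpha_{j-1}$ into $\alpha_j=\tfrac{\epsilon-1}{\epsilon}\alpha_{j-1}-\tfrac{1}{\epsilon}(k_j-2k_{j-1}+k_{j-2})$. Multiplying the $\alpha_{j-1}$ formula by $(\epsilon-1)/\epsilon$ sends the coefficient of $k_i$ for $i\le j-3$ to exactly the value $-\tfrac{(\epsilon-1)^{j-i-2}}{\epsilon^{j-i+1}}$ prescribed by~\eqref{eq:alpha}, while the three explicit terms $-\tfrac{1}{\epsilon}k_j+\tfrac{2}{\epsilon}k_{j-1}-\tfrac{1}{\epsilon}k_{j-2}$ correct the top three coefficients; a short computation shows the coefficient of $k_{j-2}$ collapses to $-1/\epsilon^3$, that of $k_{j-1}$ to $\tfrac{1}{\epsilon}+\tfrac{1}{\epsilon^2}$, and that of $k_j$ to $-\tfrac{1}{\epsilon}$, matching~\eqref{eq:alpha} term by term.

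I expect the only real obstacle to be organizational: carrying out the second-difference step cleanly and then tracking the index shifts in the inductive step, where one must check that the $(\epsilon-1)$-power over $\epsilon$-power pattern is preserved under multiplication by $(\epsilon-1)/\epsilon$ and that the three correction terms land on the correct indices $j-2,j-1,j$. The $k_{-1}=0$ convention must be handled carefully at the boundary, but no deeper difficulty arises.
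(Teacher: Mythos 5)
Your proof is correct, and it follows the paper's overall skeleton --- reduce to the lower-triangular system $k_l=\sum_{i=0}^{l}\alpha_i(i-\epsilon-l)$, get existence and uniqueness from the nonzero diagonal, then verify~\eqref{eq:alpha} by induction on $j$ --- but the mechanism of your inductive step is genuinely different and arguably cleaner. The paper keeps the cumulative sum: its first difference yields $-\epsilon\alpha_j=k_j-k_{j-1}+\sum_{l=0}^{j-1}\alpha_l$, and it then evaluates $\sum_{l=0}^{j-1}\alpha_l$ by substituting the induction hypothesis for \emph{all} previous coefficients, interchanging the order of summation, and summing a geometric series in $(\epsilon-1)/\epsilon$. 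You instead take one more difference, which eliminates the cumulative sum entirely and decouples the system into the first-order recurrence $\epsilon\alpha_l=(\epsilon-1)\alpha_{l-1}-(k_l-2k_{l-1}+k_{l-2})$, so your induction is weak rather than strong: each step is a single multiplication by $(\epsilon-1)/\epsilon$ plus three boundary-coefficient checks, with no series summation or double-sum interchange. (I verified the coefficient bookkeeping: the $i\le j-3$ terms transform correctly, and $k_{j-2},k_{j-1},k_j$ acquire coefficients $-1/\epsilon^3$, $1/\epsilon+1/\epsilon^2$, $-1/\epsilon$, where $-1/\epsilon^3$ is exactly the $i=j-2$ term of the sum in~\eqref{eq:alpha}; your recurrence is also valid at $l=1$ under the convention $k_{-1}=0$, as you note.) What the paper's route buys is that it works directly with the closed form at every stage; what yours buys is locality --- the second-difference trick makes the verification independent of all but the immediately preceding coefficient, and it also makes transparent why the coefficients for $\epsilon=1$ (Corollary~\ref{thm:fix}) collapse to pure second differences $-(k_l-2k_{l-1}+k_{l-2})$.
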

(The first sum in~\eqref{eq:alpha} is, by usual convention, taken to be $0$ if $j<2$.)

\begin{proof}[Proof of Theorem~\ref{thm:repepsilongeneral}]
  When $\sum_{r=1}^n x_r =j$, we should have $f(x)=k_j$. So, to find a coefficient vector 
  $\ve{\alpha}=(\alpha_0, \alpha_1, \ldots, \alpha_{n})^T \in \bbr^{n+1}$ which establishes the required 
  representation, we need to find a solution to the following system of $n+1$ linear equations: 
  \begin{equation}\label{eq:alpha2}
    k_j = \sum_{i=0}^{n}\alpha_i \n{i-\epsilon_i-j}
        = \sum_{i=0}^{j}\alpha_i \, (i-\epsilon_i-j) \quad \mbox{for } j=0,1,\ldots,n.
 \end{equation} 
 The matrix underlying this system is the lower-triangular matrix 
 \begin{align*} 
   A &= \left(\n{q -\epsilon_q -p}\right)_{p,q=1,2,\ldots,n+1}\\ 
     &= 
   \begin{pmatrix}
     -\epsilon_0 & 0 & 0 & 0 & \cdots & 0 \cr
     -1-\epsilon_0 & -\epsilon_1 & 0 &  0 & \cdots & 0 \cr
     -2-\epsilon_0 & -1-\epsilon_1 & -\epsilon_2 & 0 & \cdots & 0 \cr
     \vdots & \vdots & \vdots & \vdots & \ddots &  \vdots \cr
     -n-\epsilon_0 & -n+1-\epsilon_1 & -n+2-\epsilon_2 & -n+3-\epsilon_3 & \cdots & -\epsilon_n 
   \end{pmatrix}\\
  \end{align*}
  Because $A$ is lower-triangular with nonzero diagonal entries $-\epsilon_{q}$ ($q=0,1,\ldots,n$), this 
  system does indeed have a unique solution and therefore the representation exists and is unique. 
\end{proof}

\begin{proof}[Proof of Theorem~\ref{thm:repepsilon}]
  We check that a solution (and hence {\em the} solution) of the system~\eqref{eq:alpha2} with
  $\epsilon_i=\epsilon$ for all $i=0,1,\ldots,n$, is given by~\eqref{eq:alpha} in the statement of the theorem. 

  We proceed by induction on $j$. The case $j=0$ is easily verified, since the first equation
  in~\eqref{eq:alpha2} immediately yields $\alpha_0=-\frac{1}{\epsilon} k_0$, in agreement 
  with~\eqref{eq:alpha}. Assume now that (\ref{eq:alpha}) is satisfied by the values of $\alpha_i$ up 
  to $i=j-1$. Then, from (\ref{eq:alpha2}) and from the induction hypothesis, 
  \begin{align}
    -\epsilon \alpha_j &= k_j + \sum_{i=0}^{j-1}\alpha_i \, (\epsilon+j-i) \nonumber\\
    &= k_j + \sum_{i=0}^{j-1}\alpha_i \, (\epsilon+j-1-i) + \sum_{l=0}^{j-1}\alpha_l \nonumber\\
    &= k_j - k_{j-1} + \sum_{l=0}^{j-1}\alpha_l. \label{eq:tech1}
  \end{align}
  Substituting (\ref{eq:alpha}) in the last term of~(\ref{eq:tech1}) yields
  \begin{align}
    \sum_{l=0}^{j-1}\alpha_l &= \sum_{l=0}^{j-1} \left[
      - \sum_{i=0}^{l-2} \frac{(\epsilon-1)^{l-i-2}}{\epsilon^{l-i+1}}  k_i
      + \left(\frac{1}{\epsilon} +\frac{1}{\epsilon^2}\right) k_{l-1} -\frac{1}{\epsilon} k_l \right] \nonumber\\
    &= - \sum_{i=0}^{j-3} k_i  \sum_{l=i+2}^{j-1} \frac{(\epsilon-1)^{l-i-2}}{\epsilon^{l-i+1}}
      + \left(\frac{1}{\epsilon} +\frac{1}{\epsilon^2}\right) \sum_{l=0}^{j-1} k_{l-1} -\frac{1}{\epsilon} 
      \sum_{l=0}^{j-1} k_l \nonumber\\
    &= - \sum_{i=0}^{j-3} k_i \sum_{t=0}^{j-i-3} \frac{(\epsilon-1)^{t}}{\epsilon^{t+3}}
      - \frac{1}{\epsilon} k_{j-1} +  \frac{1}{\epsilon^2} \sum_{l=0}^{j-1} k_{l-1}\label{eq:penultimate}\\
    &= \sum_{i=0}^{j-3}    \frac{(\epsilon-1)^{j-i-2}}{\epsilon^{j-i}} \, k_i
      - \frac{1}{\epsilon} k_{j-1} +  \frac{1}{\epsilon^2}  k_{j-2} \label{eq:tech2}
  \end{align}
  where the last equality is obtained by summing the geometric series which appears in the first sum 
  of equation~(\ref{eq:penultimate}).

  Combining (\ref{eq:tech1}) and (\ref{eq:tech2}), we find
  \[
    \alpha_j = - \sum_{i=0}^{j-3}  \frac{(\epsilon-1)^{j-i-2}}{\epsilon^{j-i+1}} \, k_i
    -\frac{1}{\epsilon^3} k_{j-2} + \left(\frac{1}{\epsilon} +\frac{1}{\epsilon^2}\right) k_{j-1} -\frac{1}{\epsilon} k_j,
  \]
  which is equivalent to (\ref{eq:alpha}).
\end{proof}

There are two special cases of Theorem~\ref{thm:repepsilon} that we will use in particular. 
When $\epsilon=1/2$, Theorem~\ref{thm:repepsilon} yields:

\begin{corollary}\label{thm:rep2} 
  Every symmetric pseudo-Boolean function $f: \{0,1\}^n\to \bbr$ can be represented uniquely in the form
  \[
    f(x)=\sum_{i=0}^{n}\alpha_i \n{i-\frac12-\sum_{r=1}^n x_r}
  \]
  where
  \[
    \alpha_i=-8 \sum_{j=0}^i (-1)^{i-j}k_j - 2k_{i-1}+6 k_i
  \]
  for $i=0,1,\ldots,n$, and  $k_{-1}=0$.
  \qed
\end{corollary}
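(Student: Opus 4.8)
The plan is to obtain this corollary as a direct specialization of Theorem~\ref{thm:repepsilon}, rather than re-solving the linear system~\eqref{eq:alpha2} from scratch. That theorem already guarantees, for every $\epsilon\in(0,1]$, both the \emph{existence} and the \emph{uniqueness} of a representation of the stated shape, together with the explicit coefficient formula~\eqref{eq:alpha}. Setting $\epsilon=1/2$ therefore delivers existence and uniqueness for free, and the only work left is to check that~\eqref{eq:alpha} collapses to the claimed closed form $\alpha_i=-8\sum_{j=0}^i(-1)^{i-j}k_j-2k_{i-1}+6k_i$.

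First I would simplify the coefficient of $k_i$ inside the summation of~\eqref{eq:alpha}. With $\epsilon=1/2$ we have $\epsilon-1=-1/2$, so $(\epsilon-1)^{\,j-i-2}=(-1)^{j-i}\,2^{-(j-i-2)}$ (using $(-1)^{-2}=1$), while $\epsilon^{-(j-i+1)}=2^{\,j-i+1}$. Multiplying these, the powers of two cancel down to $2^3=8$, leaving the uniform value $8(-1)^{j-i}$ for every index $i$. The two remaining terms are immediate: $\tfrac1\epsilon+\tfrac1{\epsilon^2}=2+4=6$ and $\tfrac1\epsilon=2$. Hence~\eqref{eq:alpha} becomes
\[
  \alpha_j=-8\sum_{i=0}^{j-2}(-1)^{j-i}k_i+6k_{j-1}-2k_j.
\]

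Next I would reconcile this with the statement, whose summation runs up to $i=j$ and whose standalone terms read $-2k_{j-1}+6k_j$. This is purely a matter of reindexing: extending the summation from $i=j-2$ to $i=j$ introduces exactly the two extra terms $-8(-1)^{1}k_{j-1}-8(-1)^{0}k_j=8k_{j-1}-8k_j$, so to preserve the value I must compensate outside the sum, turning $6k_{j-1}-2k_j$ into $(6-8)k_{j-1}+(-2+8)k_j=-2k_{j-1}+6k_j$. This yields precisely the expression in the corollary, the summation now being $\sum_{j=0}^{i}(-1)^{i-j}k_j$ after renaming the indices back to match the statement.

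I do not expect any genuine obstacle: the whole argument is a bookkeeping simplification of a formula already established. The only places to be careful are the sign bookkeeping in $(-1)^{j-i-2}=(-1)^{j-i}$ and the off-by-one when folding the boundary terms $i=j-1,j$ into the extended sum; a slip in either would be the sole way to reach an incorrect closed form. As a sanity check I would also verify the first couple of values, namely $\alpha_0=-2k_0$ and $\alpha_1=6k_0-2k_1$, directly against both the closed form in the corollary and the defining equations~\eqref{eq:alpha2}.
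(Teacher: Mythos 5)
Your proposal is correct and takes essentially the same route as the paper, which obtains the corollary precisely by setting $\epsilon=1/2$ in Theorem~\ref{thm:repepsilon} and (implicitly) simplifying the coefficient formula~\eqref{eq:alpha}. Your algebra checks out: the ratio $(\epsilon-1)^{j-i-2}/\epsilon^{j-i+1}$ does collapse to $8(-1)^{j-i}$, the standalone coefficients become $6$ and $2$, and the reindexing that folds the $k_{j-1}$ and $k_j$ boundary terms into the extended sum yields exactly the stated closed form.
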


Taking $\epsilon=1$ in Theorem~\ref{thm:repepsilon}, we obtain the following. 
 
\begin{corollary}\label{thm:fix} 
  Every symmetric pseudo-Boolean function $f: \{0,1\}^n\to \bbr$ can be represented in the form
  \[
    f(x)=k_0+(k_1-k_0)\sum_{r=1}^n x_r +\sum_{i=1}^{n-1}\left(-k_{i-1}+2k_i-k_{i+1}\right) \n{i-\sum_{r=1}^n x_r}.
  \]
  \qed
\end{corollary}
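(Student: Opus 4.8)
The plan is to derive Corollary~\ref{thm:fix} directly from Theorem~\ref{thm:repepsilon} by setting $\epsilon = 1$, which is legitimate because that theorem is valid on the whole range $0 < \epsilon \le 1$. Writing $S = \sum_{r=1}^n x_r$ throughout, I would first specialize the coefficient formula~\eqref{eq:alpha}. The only delicate point is the leading sum: at $\epsilon = 1$ the factor $(\epsilon - 1)^{j-i-2}$ becomes $0^{j-i-2}$, which equals $0$ for every $i < j-2$ and equals $1$ (reading $0^0 = 1$) when $i = j-2$. Hence that sum collapses to its single surviving term $-k_{j-2}$, while the coefficient $\frac{1}{\epsilon} + \frac{1}{\epsilon^2}$ becomes $2$ and $\frac{1}{\epsilon}$ becomes $1$. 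This gives $\alpha_j = -k_{j-2} + 2k_{j-1} - k_j$ for $j \ge 2$, and, from the $j = 0$ and $j = 1$ cases (with $k_{-1} = 0$), the boundary values $\alpha_0 = -k_0$ and $\alpha_1 = 2k_0 - k_1$.

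Next I would simplify the basis functions. At $\epsilon = 1$ we have $\n{i - \epsilon - S} = \n{i - 1 - S}$, so the representation reads $f(x) = \sum_{j=0}^n \alpha_j \n{j - 1 - S}$. The crucial remark is that for $j \in \{0, 1\}$ the argument $j - 1 - S$ is never positive (since $S \ge 0$), so the truncation is inactive: $\n{-1 - S} = -1 - S$ and $\n{-S} = -S$, both linear in $S$. Substituting $\alpha_0 = -k_0$ and $\alpha_1 = 2k_0 - k_1$ and collecting the constant and linear parts, the combined $j = 0, 1$ contribution works out to exactly $k_0 + (k_1 - k_0)S$, which is the first part of the claimed expression.

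For the remaining indices $j \ge 2$ I would reindex by $i = j - 1$, so that $i$ runs over $1, \ldots, n-1$, the factor becomes $\n{j-1-S} = \n{i - S}$, and $\alpha_{i+1} = -k_{i-1} + 2k_i - k_{i+1}$. This turns the tail into $\sum_{i=1}^{n-1}(-k_{i-1} + 2k_i - k_{i+1})\,\n{i-S}$, precisely matching the stated sum, which completes the derivation.

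I do not expect a genuine obstacle, since after the substitution everything is bookkeeping. The single point deserving care is the degenerate factor $0^0$ in~\eqref{eq:alpha}; to avoid relying on that convention one could sidestep~\eqref{eq:alpha} entirely and instead verify that the proposed coefficients satisfy the defining lower-triangular system~\eqref{eq:alpha2} with $\epsilon = 1$, i.e.\ $k_j = -\sum_{i=0}^j \alpha_i\,(j+1-i)$ for all $j$. Since $-\alpha_i$ is the second difference $k_i - 2k_{i-1} + k_{i-2}$, this identity is a short double-summation (telescoping) check, and it reconfirms the boundary values $\alpha_0$ and $\alpha_1$ as a by-product.
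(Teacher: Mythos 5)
Your proposal is correct and takes essentially the same route as the paper, which presents Corollary~\ref{thm:fix} precisely as the $\epsilon=1$ specialization of Theorem~\ref{thm:repepsilon}; your computation of the coefficients (including the degenerate factor $(\epsilon-1)^{j-i-2}=0^0$ at $i=j-2$, and the fallback of verifying the proposed $\alpha_j$ directly against the lower-triangular system~\eqref{eq:alpha2}) correctly fills in the bookkeeping that the paper leaves implicit. The paper additionally sketches a more elementary direct proof via Fix's identity $\delta_i(l)=-\n{i-1-l}+2\n{i-l}-\n{i+1-l}$, but that is offered only as a supplementary remark, and your derivation matches the paper's actual stated justification.
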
 

In fact, Corollary~\ref{thm:fix} follows from work of Fix~\cite{Fix}, and a simpler direct proof can be given. 
As Fix observed, if  $\sum_{r=1}^n x_r=l$, then 
\[
  f(x)=k(l)=\sum_{i=0}^n k(i)\delta_i(l)
\]
where $\delta_i(l)=1$ if $i=l$ and $\delta_i(l)=0$ otherwise. Then, it can be seen that 
\[
  \delta_i(l)=-\n{i-1-l}+2\n{i-l}-\n{i+1-l}.
\]
From this, it follows that
\[
  f(x)=\sum_{i=0}^n k(i)\left(-\n{i-1-l}+2\n{i-l}-\n{i+1-l}\right).
\] 
On simplification, this gives
\[
  f(x)=k_0+l(k_1-k_0) + \sum_{i=1}^{n-1} (-k_{i-1}+2k_i-k_{i+1})\n{i-l},
\] 
as required. 

%%%%%%%%
\section{From representations to quadratizations}\label{sec:reptoquad}
In this section we explain how a representation of the type presented in the previous section can be 
used to construct  quadratizations of pseudo-Boolean functions. One useful observation is that when a 
coefficient $\alpha_i$ is non-negative, the corresponding term $\alpha_i \n{i-\epsilon_i-\sum_{r=1}^n x_r}$ 
in the representation of Theorem~\ref{thm:repepsilongeneral} of $f$ can be quadratized as  
$\min_{y_i} \alpha_i y_i (i-\epsilon_i-\sum_{r=1}^n x_r)$. But this translation simply does not work if 
$\alpha_i$ is negative. The strategy described in this section is to take an expression as given in 
Theorem~\ref{thm:repepsilongeneral} (or one of its special cases) and add a quantity that is identically-$0$ 
and which will result in a final expression that has no terms with negative coefficients. The following 
Lemma describes three possible such quantities. The first is going to be useful when working with 
representations of the form given in Corollary~\ref{thm:fix}, and the second and third will be useful when 
working with the representations from Corollary~\ref{thm:rep2}.  

\begin{lemma}\label{lem:zero1}
  Let
  \begin{align*} 
    E(l) &=l(l-1)+2\sum_{i=1}^{n-1} \n{i-l}, \\
    E'(l)&=\frac{l(l-1)}{2}+ 2 \sum_{\stackrel{i=2:}{i \; \text{even}}}^{n}\n{i-\frac12-l}, \label{eq:zero1}
  \end{align*}
  and
  \begin{equation*}\label{eq:zero2}
    E''(l)=\frac{l(l+1)}{2}+ 2 \sum_{\stackrel{i=1:}{i \; \text{odd}}}^{n}\n{i-\frac12-l}.
  \end{equation*}
  Then, for all $l=0,1,\ldots,n$, $E(l)= E'(l)=E''(l)=0$.
\end{lemma}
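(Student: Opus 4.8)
The plan is to prove each of the three identities $E(l)=0$, $E'(l)=0$, $E''(l)=0$ separately, in each case by evaluating the sum $\sum \n{i-c-l}$ explicitly and checking that it cancels the quadratic term. The key observation is that the summands $\n{i-c-l}$ split into two regimes depending on whether $i$ exceeds $l+c$ or not. I would first record the elementary fact that $\n{a}=\min(a,0)$ equals $a$ when $a\le 0$ and equals $0$ when $a\ge 0$, so that $\n{i-c-l}=i-c-l$ precisely when $i\le l+c$ and $\n{i-c-l}=0$ otherwise. This turns each of the three sums into an ordinary arithmetic sum over a truncated range of indices.

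For $E(l)$, I would compute $\sum_{i=1}^{n-1}\n{i-l}$. Since $\n{i-l}=i-l$ for $i\le l$ and $0$ for $i>l$, the only nonzero contributions come from $i=1,\dots,\min(l,n-1)$; because $l\le n$, one checks the case $l=n$ separately but for $l\le n-1$ the sum is $\sum_{i=1}^{l}(i-l)=-\sum_{k=0}^{l-1}k=-\frac{l(l-1)}{2}$. Multiplying by $2$ gives $-l(l-1)$, which exactly cancels the leading $l(l-1)$, so $E(l)=0$. I expect the boundary value $l=n$ to require a small separate check, since then $i$ ranges only to $n-1<l$; this is the one place where the truncation at $n-1$ rather than $n$ matters, and I would verify that the identity still closes there.

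For $E'(l)$ and $E''(l)$ the structure is identical but the sums run over even or odd indices $i$ with the shift $c=\frac12$. Here $\n{i-\frac12-l}=i-\frac12-l$ exactly when $i\le l$ (since $i$ is an integer, $i-\frac12-l\le 0 \iff i\le l+\frac12 \iff i\le l$), and $0$ otherwise. So I would evaluate $\sum_{i\text{ even},\,i\le l}\bigl(i-\frac12-l\bigr)$ and the analogous odd sum, splitting into the cases $l$ even and $l$ odd to count how many even (respectively odd) integers lie in $\{1,\dots,l\}$ or $\{2,\dots,l\}$. Summing the even integers up to $l$ and the $-\frac12-l$ offsets over those indices, I would check that $2$ times the resulting expression equals $-\frac{l(l-1)}{2}$ in the $E'$ case and $-\frac{l(l+1)}{2}$ in the $E''$ case.

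The main obstacle will be the parity bookkeeping in $E'$ and $E''$: the count of even versus odd indices up to $l$ depends on the parity of $l$, so each identity genuinely splits into two subcases, and the offset $-\frac12$ must be handled with the correct index count in each. I expect that once the number of terms and the sum of the retained indices are written correctly for each parity of $l$, the arithmetic collapses to the stated quadratic, but the verification is case-heavy rather than conceptually hard. A cleaner alternative, which I would mention, is to prove all three identities by induction on $l$: establishing $E(0)=E'(0)=E''(0)=0$ and then showing $E(l)-E(l-1)=0$, which reduces each claim to a single first-difference computation and sidesteps the parity casework, since incrementing $l$ by $1$ adds exactly one new nonzero term to each sum.
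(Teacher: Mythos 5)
Your proposal is correct and follows essentially the same route as the paper: rewrite each truncated sum using the fact that $\n{i-c-l}=i-c-l$ exactly when $i\le l$, then verify the cancellation of the quadratic term by direct arithmetic, with parity casework on $l$ for the half-integer shifts (and a harmless boundary check at $l=n$ for $E$). The only difference is that the paper avoids redoing the even/odd casework for $E''$ by observing that $E'(l)+E''(l)=l^2+2\sum_{i=1}^{n}\n{i-\frac12-l}=l^2-l^2=0$, hence $E''=-E'=0$, whereas you compute the odd-index sum directly, which works just as well.
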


\begin{proof}
  First we show that $E(l)$ is identically-$0$. We have 
  \begin{align*} 
    E(l) &= l(l-1)+2\sum_{i=1}^{n-1} \n{i-l} \\ 
         &= l(l-1)+2\sum_{i=1}^{l-1} (i-l) \\
         &= l(l-1)-2\sum_{j=1}^{l-1} j \\
         &= l(l-1)-l(l-1)=0.
  \end{align*} 
  We next show that $E'(l) = 0$ for all values of $l$. Fix $l$ and
  note that $i-\frac12-l \leq 0$ if and only if $i\leq l$. Hence,
  \begin{equation}\label{eqn:E}
      \sum_{\stackrel{i=2:}{i \; \text{even}}}^{n} \n{i-\frac12-l}
    = \sum_{\stackrel{i=2:}{i \; \text{even}}}^{l} \left(i-\frac12-l\right)
    = \sum_{\stackrel{i=2:}{i \; \text{even}}}^{l}  i - \left(\frac12+l\right) \left\lfloor \frac{l}{2}\right\rfloor.
  \end{equation}
  By considering separately the cases where $l$ is respectively even or odd, one can conclude
  that $E'(l)=0$ for all $l =0,\ldots,n$. For, if $l=2r$, then the expression on the right in 
  equation~(\ref{eqn:E}) is $r/2-r^2=-l(l-1)/4$ and, if $l=2r+1$, it is $-r/2-r^2=-l(l-1)/4$.
  The identity $E''(l) =0$ (for all $l$) can be proved similarly, or can be deduced from the 
  previous one by observing  that, for all $l=0,1,\ldots,n$,
  \[
    \sum_{i=1}^{n} \n{i-\frac12-l} = \sum_{i=1}^{l} \left(i-\frac12-l\right) = -\frac12 l^2.
  \]
  We then can note that $$E'(l)+E''(l)=l^2+2\sum_{i=1}^n \n{i-\frac{1}{2}-l}=l^2-l^2=0,$$ so that $E''=-E'=0$.
\end{proof}

We gave a direct, self-contained, proof of Lemma~\ref{lem:zero1}, but in fact these three identities follow 
from Corollary~\ref{thm:rep2} and Corollary~\ref{thm:fix}. For, if we apply Corollary~\ref{thm:fix} to the 
function $f(x)=\sum_{r=1}^n x_r \left(\sum_{r=1}^nx_r -1\right)$, we see that 
\[
  f(x)=-2\sum_{i=1}^{n-1}\n{i-\sum_{r=1}^n x_i},
\]
which implies the first identity of Lemma~\ref{lem:zero1}. Applying Corollary~\ref{thm:rep2} to $f(x)$ shows 
(after some calculation) that 
\[
  f(x)=-4\sum_{\stackrel{i=2:}{i \; \text{even}}}^{n}\n{i-\frac12-l},
\]
giving the second identity (that $E'$ is identically-$0$). Applying Corollary~\ref{thm:rep2} to the function 
$g(x)=\sum_{r=1}^n x_r \left(\sum_{r=1}^nx_r+1\right)$ yields the third identity. 

%%%%%%%%
\section{Upper bounds on number of auxiliary variables}\label{sec:numberaux}
\subsection{Any symmetric function} 
We first have the following very general result, which provides an explicit construction of a quadratization 
of any pseudo-Boolean function, using no more than $n-2$ auxiliary variables. 

\begin{theorem}\label{thm:symm_quad}
  Every symmetric function of $n$ variables can be quadratized using $n-2$ auxiliary variables.
\end{theorem}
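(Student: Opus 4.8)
The plan is to build the quadratization directly from the representation of Corollary~\ref{thm:fix}, which writes any symmetric $f$ as
\[
  f(x)=k_0+(k_1-k_0)\,l+\sum_{i=1}^{n-1} c_i\,\n{i-l},
\]
where $l=\sum_{r=1}^n x_r$ and $c_i=-k_{i-1}+2k_i-k_{i+1}$. As observed at the start of Section~\ref{sec:reptoquad}, any term $c_i\n{i-l}$ with $c_i\ge 0$ is immediately quadratizable, since $\beta\,\n{i-l}=\min_{y_i\in\{0,1\}}\beta\,y_i(i-l)$ whenever $\beta\ge 0$, and $y_i(i-l)$ is quadratic in $(x,y)$. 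A naive count would spend one auxiliary variable on each of the $n-1$ terms, and the negative coefficients $c_i$ are exactly what block the translation. First I would remove the negativity by exploiting the identically-zero function $E(l)=l(l-1)+2\sum_{i=1}^{n-1}\n{i-l}$ from Lemma~\ref{lem:zero1}: for any real $\lambda$, adding $\lambda E(l)$ leaves $f$ unchanged while replacing each coefficient $c_i$ by $c_i+2\lambda$ and contributing a genuinely quadratic term $\lambda\,l(l-1)=2\lambda\sum_{r<s}x_rx_s$ to the $x$-part.

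The key step, and the one that brings the count down from $n-1$ to $n-2$, is to choose $\lambda$ not merely large but exactly at the threshold $\lambda=-\tfrac12\min_{1\le i\le n-1}c_i$. Because $E\equiv 0$, this value of $\lambda$ is admissible regardless of its sign. With this choice the new coefficients $\beta_i=c_i-\min_j c_j$ are all nonnegative, and at least one of them---namely $\beta_{i^*}$ for any index $i^*$ attaining the minimum---is exactly $0$. That term disappears from the representation, so only $n-2$ of the $\n{i-l}$ terms survive, each with a nonnegative coefficient.

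Finally I would assemble
\[
  g(x,y)=q(x)+\sum_{i\ne i^*}\beta_i\,y_i\,(i-l),\qquad q(x)=k_0+(k_1-k_0)\,l+\lambda\,l(l-1),
\]
over the $n-2$ indices $i\in\{1,\ldots,n-1\}\setminus\{i^*\}$, and check that minimizing over $y$ recovers $\sum_{i\ne i^*}\beta_i\n{i-l}=f(x)+\lambda E(l)=f(x)$, while $q(x)$ is quadratic in $x$ and each product $y_i(i-l)$ is quadratic in $(x,y)$. I expect the only real obstacle to be the passage from $n-1$ to $n-2$: the freedom in $\lambda$ is a single degree of freedom, so one cannot hope to zero more than one coefficient in general, but one can always zero at least one while keeping the rest nonnegative by sitting exactly on the boundary of the feasible range of $\lambda$. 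Allowing $\lambda$ to be negative---legitimate precisely because $E$ is identically zero---is what makes this boundary choice available even when all the $c_i$ happen to be nonnegative.
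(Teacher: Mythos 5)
Your proof is correct, but it takes a genuinely different route from the paper's. The paper works from Corollary~\ref{thm:rep2} (the $\epsilon=\tfrac12$ representation), whose sum has $n$ terms $\n{i-\tfrac12-l}$, $i=1,\ldots,n$, and then uses \emph{two} independent zero-identities from Lemma~\ref{lem:zero1}: adding $-\tfrac{\alpha_r}{2}E'(l)-\tfrac{\alpha_s}{2}E''(l)$, where $\alpha_r$ is the minimum even-indexed coefficient and $\alpha_s$ the minimum odd-indexed one, shifts the even and odd coefficients separately and zeroes two of them, bringing $n$ terms down to $n-2$. You instead work from Corollary~\ref{thm:fix} (the $\epsilon=1$ representation), which starts with only $n-1$ terms, and use the single identity $E(l)$ with one multiplier $\lambda$ chosen exactly at the threshold $-\tfrac12\min_i c_i$, zeroing one coefficient: $n-1$ terms become $n-2$. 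Both yield $y$-linear quadratizations, symmetric in $x$, with at most $n-2$ auxiliary variables, and your threshold argument (one degree of freedom can always kill one coefficient while keeping the rest nonnegative, regardless of the sign of $\lambda$) is sound; the separable minimization over the $y_i$ and the identity $l(l-1)=2\sum_{r<s}x_rx_s$ make the final assembly valid. It is worth noting that your construction sharpens the paper's parenthetical remark following the theorem: the paper attributes only an $n-1$ bound to constructions based on Corollary~\ref{thm:fix} and $E$ (Fix adds a multiple of $E$ to each term separately), whereas your global, boundary choice of $\lambda$ shows that this representation also achieves $n-2$. What the paper's half-integer approach buys, by contrast, is reusability: the parity split between $E'$ and $E''$ is exactly what lets the later arguments (Theorem~\ref{thm:positiveterm}, Corollaries~\ref{thm:k-out-of-n} and~\ref{thm:exact-k}) exploit alternating-sign coefficient patterns to cancel roughly half of all terms at once, something your single identity $E$ cannot do.
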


\begin{proof}
  Using Corollary~\ref{thm:rep2}, we can write any symmetric function $f$ as
  \[
    f(x)= -\alpha_0 \left(\frac12+\sum_{j=1}^n x_j\right) + \sum_{i=1}^{n}\alpha_i \n{i-\frac12-\sum_{j=1}^n x_j}. 
  \]
  Let $\alpha_r = \min\{\alpha_i: i \mbox{ even}, i \geq2\}$ and $\alpha_s= \min\{\alpha_i: i \mbox{ odd}\}$.
  Now add to $f$ the expression
  \[
    -\frac{{\alpha_r}}{2}E'\left(\sum_{j=1}^n x_j\right) -\frac{{\alpha_s}}{2}E''\left(\sum_{j=1}^n x_j\right),
  \] 
  which is identically-$0$. This results in an expression for $f$ of the form
  \[
    f(x)= a_0 + a_1\sum_{j=1}^n x_j +a_2 \sum_{1\leq i<j \leq n} x_ix_j
    + \sum_{i=1}^{n}\beta_i \n{i-\frac12-\sum_{j=1}^n x_j},
  \]
  where, for each $i$, if $i$ is even, $\beta_i=\alpha_i-{\alpha_r} \ge 0$, and if $i$ is odd, 
  $\beta_i=\alpha_i-{\alpha_s} \ge 0$. So all the coefficients $\beta_i$ are non-negative. Furthermore, 
  $\beta_r=\beta_s=0$, so we have an expression for $f$ involving no more than $n-2$   positive coefficients 
  $\beta_i$. Then,
  \[
    g(x,y)= a_0 + a_1\sum_{j=1}^n x_j +a_2 \sum_{1\leq i<j \leq n} x_ix_j
    + \sum_{\stackrel{i=1:}{i \neq r,s}}^{n}\beta_i y_i\left(i-\frac12-\sum_{j=1}^n x_j\right)
  \]
  is a quadratization of $f$ involving at most $n-2$ auxiliary variables.
\end{proof}
(A construction in~\cite{Fix} shows an upper bound of $n-1$. This is obtained by adding a multiple of 
$E(\sum_{r=1}^n x_r)$ to each term in the expression from Corollary~\ref{thm:fix}, rather than to the 
expression as a whole, resulting in more complex quadratizations.)  
 
Notice that the quadratization in the proof of Theorem~\ref{thm:symm_quad} is $y$-linear, so we have in fact shown: 

\begin{theorem}\label{thm:upperlinear} 
  Every symmetric function of $n$ variables has a $y$-linear quadratization involving at most 
  $n-2$ auxiliary variables.
\end{theorem}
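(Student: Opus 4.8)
The plan is to observe that Theorem~\ref{thm:upperlinear} is essentially a free corollary of the proof of Theorem~\ref{thm:symm_quad}, not an independent result requiring a new argument. The entire content is the remark that the quadratization constructed there already has the special form required by a $y$-linear quadratization, namely that it contains no products of auxiliary variables. So my first step would be to recall the explicit form of the quadratization $g(x,y)$ produced at the end of the proof of Theorem~\ref{thm:symm_quad}:
\[
  g(x,y)= a_0 + a_1\sum_{j=1}^n x_j +a_2 \sum_{1\leq i<j \leq n} x_ix_j
    + \sum_{\stackrel{i=1:}{i \neq r,s}}^{n}\beta_i y_i\left(i-\frac12-\sum_{j=1}^n x_j\right).
\]

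The second step is to inspect the auxiliary variables in this expression and verify that each $y_i$ appears only linearly and never in a product with another $y_{i'}$. Indeed, the first three summands $a_0$, $a_1\sum_j x_j$, and $a_2\sum_{i<j}x_ix_j$ involve no $y$-variables at all, and each remaining summand is of the form $\beta_i\, y_i\,(i-\frac12-\sum_j x_j)$, which is the product of a single auxiliary variable $y_i$ with a function that is linear in $x$ alone. Hence $g$ matches the template $g(x,y)=q(x)+\sum_i a_i(x)y_i$ from the definition of a $y$-linear quadratization, with $q(x)=a_0+a_1\sum_j x_j+a_2\sum_{i<j}x_ix_j$ quadratic in $x$ and each $a_i(x)=\beta_i(i-\frac12-\sum_j x_j)$ linear in $x$. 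The count of auxiliary variables is unchanged, so it remains at most $n-2$, exactly as before (the two indices $r$ and $s$ having been dropped because $\beta_r=\beta_s=0$).

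I do not anticipate a genuine obstacle here, since no new estimate or construction is needed; the work was entirely front-loaded into Theorem~\ref{thm:symm_quad}. The only thing to be careful about is to confirm that none of the steps used to reach the final $g$ introduced an auxiliary-variable product implicitly, which is why I want to cite the explicit formula above rather than just the statement of the earlier theorem. Since the identically-zero corrections $E'$ and $E''$ depend only on $\sum_j x_j$ and not on any $y$-variables, adding their multiples to $f$ cannot create $y$-products, and the subsequent replacement of each term $\beta_i\n{i-\frac12-\sum_j x_j}$ by $\min_{y_i}\beta_i y_i(i-\frac12-\sum_j x_j)$ introduces exactly one fresh auxiliary variable per term, linearly. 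Thus the proof reduces to this single verifying observation, and I would phrase it as a one-line remark that the $g$ exhibited in the proof of Theorem~\ref{thm:symm_quad} is manifestly $y$-linear.
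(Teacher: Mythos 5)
Your proposal is correct and matches the paper's own argument exactly: the paper proves Theorem~\ref{thm:upperlinear} simply by observing that the quadratization $g(x,y)$ constructed in the proof of Theorem~\ref{thm:symm_quad} contains no products of auxiliary variables and hence is $y$-linear, with the same bound of $n-2$ auxiliary variables. Your additional check that the identically-zero corrections $E'$ and $E''$ involve no $y$-variables is a sensible (if implicit in the paper) confirmation, but introduces nothing beyond the paper's one-line remark.
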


Furthermore, these quadratizations are also symmetric in the $x$-variables. Not every quadratization of a 
symmetric function must itself be symmetric in the original variables. For example, consider the negative 
monomial 
\[
  -\prod_{i=1}^n x_i=-x_1x_2\cdots x_n.
\] 
As we have seen, this has the quadratization $y\left(n-1-\sum_{j=1}^n x_j\right)$, which is symmetric. 
However, it also has the quadratization 
\[
  (n-2)x_{n}y - \sum_{i=1}^{n-1} x_i (y-\xbar_n),
\] 
where $\xbar_n=1-x_n$, which is not symmetric in the $x$-variables. 

\subsection{Monomials} 
The quadratization of monomials (positive and negative) has been fairly well-studied. The \emph{standard quadratization} 
of the negative monomial 
\[
  f(x)=-\prod_{i=1}^n x_i=-x_1x_2\cdots x_n,
\]
is 
\[
  s_n(x_1,x_2,\ldots, x_n,y)=y\left(n-1-\sum_{j=1}^n x_j\right).
\]
(A related paper by the present authors~\cite{ABCG} gives a complete characterization of all the quadratizations 
of negative monomials involving one auxiliary variable and this is, in a sense, one of  the simplest.) If we apply 
Corollary~\ref{thm:rep2} to the negative monomial, noting that $k_i=0$ for $i<n$ and $k_n=-1$, we obtain the 
representation 
\[
  f(x)=2\n{n-\frac{1}{2}-\sum_{r=1}^nx_r},
\]
which immediately leads to the quadratization 
\[
  h=2y\left(n-\frac{1}{2}-\sum_{r=1}^nx_r\right),
\]
only slightly different from the standard one. We could, instead, apply Corollary~\ref{thm:fix}, which would show 
that $f(x)=\n{n-1-\sum_{r=1}^nx_r},$ from which we immediately obtain the standard quadratization. 

As we noted earlier, the best known result (smallest number of auxiliary variables) for positive monomials is that 
they can be quadratized using $\left\lfloor \frac{n-1}{2} \right\rfloor$ auxiliary variables. This was shown by  
Ishikawa~\cite{Ishikawa2011}. We can see that this many auxiliary variables suffice by using our representation 
theorem, Corollary~\ref{thm:rep2}, together with the argument given in the proof of Theorem~\ref{thm:symm_quad}. 

\begin{theorem} \label{thm:positiveterm}
  The positive monomial $P=\prod_{i=1}^n x_i$ can be quadratized using $\left\lfloor \frac{n-1}{2} \right\rfloor$ 
  auxiliary variables.
\end{theorem}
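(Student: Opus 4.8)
The plan is to specialize the argument behind Theorem~\ref{thm:symm_quad} to the positive monomial, exploiting the fact that its representation from Corollary~\ref{thm:rep2} is extremely sparse. First I would compute the coefficients $\alpha_i$ for $P$. Since $k_i = 0$ for $i < n$ and $k_n = 1$ (with $k_{-1} = 0$), the formula $\alpha_i = -8\sum_{j=0}^i (-1)^{i-j} k_j - 2k_{i-1} + 6k_i$ gives $\alpha_i = 0$ for every $i < n$ and $\alpha_n = -8 + 6 = -2$. Hence Corollary~\ref{thm:rep2} collapses to the single term
\[
P = -2\n{n - \frac12 - \sum_{r=1}^n x_r},
\]
which one checks directly: the bracket is negative only when all $x_r = 1$, where it equals $-\frac12$, so that $P = 1$, and it is nonnegative otherwise, giving $P = 0$.

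The only obstruction to quadratizing is the lone negative coefficient $\alpha_n = -2$ at index $n$. To remove it I would add an identically-zero multiple of one of the expressions from Lemma~\ref{lem:zero1}, chosen according to the parity of $n$. If $n$ is even I add $E'\!\left(\sum_{r=1}^n x_r\right)$ (that is, $-\frac{\alpha_n}{2}E' = E'$); if $n$ is odd I add $E''\!\left(\sum_{r=1}^n x_r\right)$. In either case the coefficient at index $n$ becomes $\alpha_n + 2 = 0$, every other index of the same parity as $n$ (in the relevant range) acquires coefficient $0 + 2 = 2$, and all indices of the opposite parity keep coefficient $0$. The quadratic-in-$l$ contribution $\frac{l(l-1)}{2}$ or $\frac{l(l+1)}{2}$ coming from $E'$ or $E''$, with $l = \sum_{r=1}^n x_r$, expands to a genuine quadratic polynomial in $x$ and is absorbed into the quadratic-in-$x$ part.

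It then remains to count the surviving positive terms. For even $n$ the nonzero coefficients sit at the even indices $2, 4, \ldots, n-2$, which is $\frac{n}{2} - 1 = \lfloor \frac{n-1}{2}\rfloor$ terms; for odd $n$ they sit at the odd indices $1, 3, \ldots, n-2$, which is $\frac{n-1}{2} = \lfloor \frac{n-1}{2}\rfloor$ terms. Each surviving term $2\n{i - \frac12 - \sum_{r=1}^n x_r}$ is quadratized by $\min_{y_i} 2 y_i\left(i - \frac12 - \sum_{r=1}^n x_r\right)$ using a single auxiliary variable, exactly as in the proof of Theorem~\ref{thm:symm_quad}, yielding a quadratization of $P$ with $\lfloor \frac{n-1}{2}\rfloor$ auxiliary variables. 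I do not expect a real obstacle here: the one place requiring care is the bookkeeping of which indices survive in each parity case, and checking the small cases $n = 1, 2$ against the formula, where the count correctly returns $0$.
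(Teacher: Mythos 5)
Your proposal is correct and follows essentially the same route as the paper's own proof: both derive $P = -2\n{n-\frac12-\sum_{r=1}^n x_r}$ from Corollary~\ref{thm:rep2}, cancel the single negative coefficient by adding $E'$ (for $n$ even) or $E''$ (for $n$ odd) from Lemma~\ref{lem:zero1}, and quadratize the $\lfloor\frac{n-1}{2}\rfloor$ surviving positive terms with one auxiliary variable each. Your coefficient computation, parity bookkeeping, and term counts all match the paper's argument.
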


\begin{proof}
  Consider first the case where $n$ is even. By Corollary~\ref{thm:rep2}, noting that $k_i=0$ for $i<n$ and $k_n=1$, 
  we have $P = -2 \n{n-\frac{1}{2}-l}$ where $l=\sum_{r=1}^n x_r$. By Lemma~\ref{lem:zero1},
  \begin{align*}
    P &= -2 \n{n-\frac{1}{2}-l} + E'(l) \\
      &= \frac{l(l-1)}{2}+ \sum_{\stackrel{i=2:}{i \; \text{even}}}^{n-2}2\n{i-\frac{1}{2}-l} \\
      &= \sum_{1\leq i<j \leq n} x_ix_j + \min_y \sum_{\stackrel{i=2:}{i \; \text{even}}}^{n-2}2 y_i\left(i-\frac{1}{2}-l\right).
  \end{align*}
  This provides the required quadratization using $\frac{n}{2}-1=\left\lfloor \frac{n-1}{2} \right\rfloor$ new 
  variables.

  When $n$ is odd, one similarly derives the following from Lemma~\ref{lem:zero1}:
  \begin{align*}
    P &= -2 \n{n-\frac{1}{2}-l} + E''(l) \\
      &= \sum_{i=1}^n x_i + \sum_{1\leq i<j \leq n} x_ix_j 
       + \min_y \sum_{\stackrel{i=1:}{i \; \text{odd}}}^{n-2}2 y_i\left(i-\frac{1}{2}-l\right).
  \end{align*}
\end{proof}

This quadratization of $P$ requires the same number of auxiliary variables as Ishikawa's construction. Both 
quadratizations are, in fact, identical when $n$ is even, but appear to be different when $n$ is odd.

Note that an alternative approach to the case of odd $n$ would be as follows. Write
\[
  P = \prod_{i=1}^{n-1} x_i - \prod_{i=1}^{n-1} x_i \xbar_n,
\]
where $\xbar_n=1-x_n$. The first term can now be quadratized using $\frac{n-1}{2}-1$ new variables (since it 
contains an even number of variables), and the second term, viewed as a negative monomial in $x_1,\ldots,x_{n-1},\xbar_n$, 
has a standard quadratization requiring one further auxiliary variable. Thus, this leads again
to a quadratization of $P$ with $\frac{n-1}{2}=\left\lfloor \frac{n-1}{2}\right\rfloor$ new variables.
This quadratization is also different from Ishikawa's. 

\subsection{$t$-out of $n$ and exact-$t$ functions} 
Consider now the \emph{$t$-out-of-$n$} function defined by: 
\[
  f_{t,n}(x)=1\qquad \text{if and only if}\qquad \sum_{i=1}^n x_i \geq t.
\]

The basic Boolean functions $\textrm{And}_n(x):=\prod_{i=1}^n x_i$ (a positive monomial) and 
$\textrm{Or}_n(x):=1-\prod_{i=1}^n(1-x_i)$ are examples of $t$-out of $n$ functions with $t=n$
and $t=1$, respectively. Another popular example is the \emph{majority} function given by:
\[
  \textrm{Maj}_n(x) := \begin{cases}
    1 & \ \text{if}\ \sum_{i=1}^n x_i\geq \left\lceil n/2 \right\rceil,\\
    0 & \ \text{otherwise},
  \end{cases}
\]
which breaks ties in favor of ones when $n$ is even. In this case, $t=\left\lceil n/2 \right\rceil$.

\begin{corollary}\label{thm:k-out-of-n}
  The $t$-out-of-$n$ function $f_{t,n}$ can be quadratized using $\left\lceil n/2 \right\rceil$ 
  auxiliary variables.
\end{corollary}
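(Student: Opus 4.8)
The plan is to specialize the machinery of Section~\ref{sec:numberaux} to $f_{t,n}$. Since $f_{t,n}$ is symmetric with $k_i=0$ for $i<t$ and $k_i=1$ for $i\ge t$ (and $k_{-1}=0$), I would first substitute these values into the coefficient formula of Corollary~\ref{thm:rep2}. In $\alpha_i=-8\sum_{j=0}^i(-1)^{i-j}k_j-2k_{i-1}+6k_i$ the alternating sum $\sum_{j=t}^i(-1)^{i-j}$ collapses to $1$ if $i\equiv t\pmod 2$ and to $0$ otherwise, so a one-line computation gives $\alpha_i=0$ for $i<t$, $\alpha_t=-2$, and $\alpha_i=+4$ or $-4$ for $i>t$ according as $i\not\equiv t$ or $i\equiv t\pmod2$. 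Thus $f_{t,n}$ has a very sparse, regular representation.

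Next I would run the proof of Theorem~\ref{thm:symm_quad} essentially verbatim: add $-\tfrac{\alpha_r}{2}E'\bigl(\sum_jx_j\bigr)-\tfrac{\alpha_s}{2}E''\bigl(\sum_jx_j\bigr)$, where $\alpha_r$ and $\alpha_s$ are the smallest even-indexed and smallest odd-indexed coefficients. By Lemma~\ref{lem:zero1} this is identically zero; it pushes the linear/quadratic debris into a term $a_0+a_1\sum_jx_j+a_2\sum_{i<j}x_ix_j$ and replaces each coefficient by $\beta_i=\alpha_i-\alpha_r$ (even $i$) or $\beta_i=\alpha_i-\alpha_s$ (odd $i$), all now nonnegative, with the minimum in each parity class driven to $0$. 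Each strictly positive $\beta_i$ then costs one auxiliary variable via the $y$-linear term $\beta_i y_i\bigl(i-\tfrac12-\sum_jx_j\bigr)$, so the whole question reduces to counting the positive $\beta_i$.

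The heart of the argument is this count, organized by parity. After clearing, the surviving positive coefficients sit at the indices of one parity class lying at or below $t$ together with the indices of the complementary parity class lying above $t$; in the aligned cases (e.g.\ $t$ even, or $t$ and $n$ both odd, and at the extreme values of $t$) these total exactly $\lceil n/2\rceil$ or fewer. I expect the genuine obstacle to be the mismatched case, concretely $t$ odd with $n$ even and $t$ in the interior: there the full lower odd class (contributing $(t+1)/2$ positives, since $\alpha_t=-2$ becomes $+2$) coexists with the full upper even class up to $i=n$ (contributing $(n-t+1)/2$ positives), and the naive tally is $\tfrac n2+1$, one over budget. To recover $\lceil n/2\rceil$ I would not treat the extremal term alone: $\beta_n\,\n{n-\tfrac12-\sum_jx_j}$ is a scalar multiple of the negative monomial (nonzero only when $\sum_jx_j=n$), and I would try to absorb it, together with the even-indexed tail, into the efficient quadratization of Theorem~\ref{thm:positiveterm} (reusing the cancellation $-2\n{n-\tfrac12-l}+E'(l)$ that already removes the $i=n$ term) so as to delete the offending variable. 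Confirming that this merging keeps every remaining coefficient nonnegative without reintroducing a variable—very possibly at the cost of leaving the $y$-linear class and allowing a single product $y_iy_j$—is the delicate step where the real content of the $\lceil\cdot\rceil$ bound resides; the boundary values $t=1$, $t=n$, and $t$ near $n$ should then fall out by direct, if fiddly, checking.
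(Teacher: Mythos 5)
Your coefficient computation and your counting are both correct, and your strategy is exactly the paper's: the paper's own proof likewise derives $\alpha_i=0$ for $i<t$, $\alpha_t=-2$, $\alpha_i=4(-1)^{i-t-1}$ for $i>t$ from Corollary~\ref{thm:rep2}, and then adds $2E'(l)$ or $2E''(l)$ according to the parity of $t$. The genuine gap in your proposal is the case you yourself flag: for $t$ odd, $n$ even, $3\le t\le n-3$, your repair (absorbing $\beta_n\n{n-\tfrac12-l}$ into a Theorem~\ref{thm:positiveterm}-style cancellation, possibly with a $y_iy_j$ product) is never carried out, and it cannot be completed inside the half-integer framework at all. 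Indeed, the $n+3$ functions $1,l,l^2,\n{i-\tfrac12-l}$ ($1\le i\le n$) span the $(n+1)$-dimensional space of symmetric functions of $l$, so the identically-zero expressions form exactly the $2$-dimensional space $\{aE'(l)+bE''(l)\}$; clearing the $-4$'s at odd indices above $t$ forces $b\ge 2$, which turns all $(t-1)/2$ zero coefficients at odd indices below $t$ positive, and the count is then at least $(t+1)/2+(n-t+1)/2=\tfrac n2+1$, no matter how $a,b$ are chosen. So your proof is incomplete in precisely this case. You should know, however, that the paper's proof is incomplete in the same way: its assertion that the corrected expression "has $\lceil n/2\rceil$ positive coefficients" is simply false when $t$ is odd and $n$ is even (for $f_{3,6}$, adding $2E''$ leaves positive coefficients at $i=1,3,4,6$ --- four, not three). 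Your careful count has exposed a flaw in the published argument, not merely a difficulty in your own.

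The statement itself can be rescued, but the needed flexibility is in the breakpoints, not in products of auxiliary variables: one must mix integer and half-integer shifts, i.e., invoke Theorem~\ref{thm:repepsilongeneral} with non-constant $\epsilon_i$, rather than work only with Corollary~\ref{thm:rep2}. Writing $l=\sum_{j=1}^n x_j$, one can check, for example,
\[
  f_{3,6}=\frac{l(l-1)}{2}+\n{1-l}+2\n{3-l}+2\n{\tfrac92-l},
\]
and in general, for $t$ odd, $n$ even and $3\le t\le n-3$,
\[
  f_{t,n}=\frac{l(l-1)}{2}+\n{1-l}
  +2\sum_{j=1}^{(t-3)/2}\n{2j+\tfrac12-l}
  +2\n{t-l}
  +2\sum_{j=1}^{(n-t-1)/2}\n{t+2j-\tfrac12-l}.
\]
This identity is verified by noting that both sides vanish at $l=0$ and have the same first differences in $l$ (both differences equal $0$ except at $l=t-1$, where they equal $1$). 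The right-hand side has $1+\tfrac{t-3}{2}+1+\tfrac{n-t-1}{2}=\tfrac n2$ terms, all with positive coefficients, so replacing each $c\,\n{a-l}$ by $c\,y(a-l)$ and minimizing over the $y$'s yields a $y$-linear, symmetric quadratization with exactly $\lceil n/2\rceil$ auxiliary variables, as the corollary claims. Without such an ingredient, neither your argument nor the paper's establishes the mismatched-parity case.
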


\begin{proof}
  From Corollary~\ref{thm:rep2}, $f_{t,n}$ can be represented in the form 
  \begin{equation}\label{eq:ftn}
    f_{t,n}(x)=\sum_{i=0}^{n}\alpha_i \n{i-\frac12-\sum_{j=1}^n x_j}
  \end{equation}
  where $\alpha_i=0$ when $i<t$, $\alpha_t=-2$, and $\alpha_i=4(-1)^{i-t-1}$ when $i>t$.

  Since the terms of $f_{t,n}$ alternate in sign when $i\geq t$, we can again use Lemma~\ref{lem:zero1} 
  to make all coefficients non-negative by adding either $2E'(l)$ or $2E''(l)$ to~(\ref{eq:ftn}), depending 
  on the parity of $t$. The resulting expression has $\left\lceil n/2 \right\rceil$ positive 
  coefficients, and its remaining coefficients are zero. Thus, it can be quadratized with
  $\left\lceil n/2 \right\rceil$ auxiliary variables.
\end{proof}

A related function is the \emph{exact-$t$} (out of $n$) function, defined as $f^=_{t,n}(x)=1$ if and only if
the Hamming weight of $x$ equals $t$. Using Corollary~\ref{thm:rep2} again, we have that $f^=_{t,n}$ 
can be represented in the form given in \eqref{eq:ftn} with $\alpha_i=0$ when $i<t$, $\alpha_t=-2$,
$\alpha_{t+1}=6$, and $\alpha_i=0$ when $i>t+1$. Depending on the parity of $t$, we add $E'(l)$ or
$E''(l)$ to \eqref{eq:ftn} to obtain an expression with $\left\lfloor n/2\right\rfloor$ positive
coefficients, which can then be quadratized with $\left\lfloor n/2\right\rfloor$ auxiliary variables.
We have just proved the following:

\begin{corollary}\label{thm:exact-k}
  The exact-$t$ function $f^=_{t,n}$ can be quadratized using $\left\lfloor n/2 \right\rfloor$ 
  auxiliary variables.
  \qed
\end{corollary}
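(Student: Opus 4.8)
The plan is to follow the template already used for the $t$-out-of-$n$ function in Corollary~\ref{thm:k-out-of-n}, since the exact-$t$ function has the same qualitative shape and differs only in its weight vector $k$. First I would feed the weights $k_i=1$ for $i=t$ and $k_i=0$ otherwise into the coefficient formula of Corollary~\ref{thm:rep2}. Because only the single weight $k_t$ is nonzero, the alternating sum $\sum_{j=0}^i(-1)^{i-j}k_j$ collapses to $(-1)^{i-t}$ for every $i\ge t$ (and to $0$ for $i<t$), so $\alpha_i=-8(-1)^{i-t}-2k_{i-1}+6k_i$; the corrections $-2k_{i-1}+6k_i$ only adjust the two ``head'' indices, giving $\alpha_t=-2$ and $\alpha_{t+1}=6$, while for $i\ge t+2$ one is left with the clean alternation $\alpha_i=-8(-1)^{i-t}$. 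In other words $f^=_{t,n}(x)=\sum_{i=0}^n\alpha_i\n{i-\frac12-l}$ with $l=\sum_{r=1}^n x_r$, where the nonzero coefficients occupy the block $t\le i\le n$ and strictly alternate in sign, the negative ones sitting precisely at the indices $i\equiv t\pmod 2$.

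With the representation in hand, I would cancel the negative coefficients exactly as in the proof of Corollary~\ref{thm:k-out-of-n}: add a suitable multiple of the identically-zero function $E'(l)$ when $t$ is even, or of $E''(l)$ when $t$ is odd, as provided by Lemma~\ref{lem:zero1}. Such an addition raises every coefficient of one fixed parity class by a common constant and contributes only a genuine quadratic-in-$x$ term $\sum_{i<j}x_ix_j$ (together with a linear term), so choosing the multiple that lifts the most negative coefficient up to $0$ leaves all coefficients non-negative while annihilating the entire negative class. Denoting the resulting non-negative coefficients by $\beta_i$, each surviving term $\beta_i\n{i-\frac12-l}$ is then replaced by its quadratization $\min_{y_i}\beta_i y_i(i-\frac12-l)$, one auxiliary variable per strictly positive $\beta_i$, which is the same $y$-linear device used in Theorem~\ref{thm:symm_quad} and Theorem~\ref{thm:positiveterm}.

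The step I expect to be the real obstacle is the bookkeeping that pins the number of surviving positive coefficients down to exactly $\left\lfloor n/2\right\rfloor$, rather than the $\left\lceil n/2\right\rceil$ that the same procedure produces for $f_{t,n}$. Here I would count two disjoint contributions: the lifted parity class retains only its indices up to $t$ as positive (the higher ones having been zeroed by the shift), while the untouched parity class retains its positive tail among the indices exceeding $t$; summing these and simplifying the floors, with the parities of both $t$ and $n$ handled as separate cases, is what should deliver the claimed bound. Because this is exactly the point at which the exact-$t$ count must come out \emph{strictly} below the $t$-out-of-$n$ count, I regard it as the delicate part of the argument: I would verify it first on small instances, and scrutinise whether some parity combination forces one extra surviving coefficient, before trusting the general parity computation.
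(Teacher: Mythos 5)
Your computation of the representation is correct, and that is exactly where the trouble lies. Feeding $k_t=1$, $k_j=0$ ($j\neq t$) into Corollary~\ref{thm:rep2} does give $\alpha_t=-2$, $\alpha_{t+1}=6$ and the alternating tail $\alpha_i=-8(-1)^{i-t}$ for $i\ge t+2$; for instance at weight $l=t+2$ one needs $(-2)(-\tfrac52)+6(-\tfrac32)+(-8)(-\tfrac12)=0$, which checks out. Note that the paper's own proof asserts instead that $\alpha_i=0$ for all $i>t+1$; that assertion is false, since the two-term expression $-2\n{t-\frac12-l}+6\n{t+\frac12-l}$ equals $-4(l-t-1)\neq 0$ at every weight $l\ge t+2$, and by the uniqueness in Theorem~\ref{thm:repepsilongeneral} the correct coefficients are yours. (Under the paper's incorrect coefficients the count $\lfloor n/2\rfloor$ would follow immediately, which is presumably how the corollary was stated.) The consequence is that the bookkeeping step you flagged as delicate does not merely need care --- it fails. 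To make the class of parity $t$ non-negative you must lift it by $8$, i.e.\ add $4E'$ or $4E''$ (not $E'$ or $E''$); then the lifted class remains positive at \emph{all} of its indices not exceeding $t$ (the zeros below $t$ become $8$, and $\alpha_t$ becomes $6$, not $0$), contributing $t/2$ resp.\ $(t+1)/2$ positive coefficients, while the untouched class contributes one positive coefficient for every index of the other parity in $[t+1,n]$. The total is $\lfloor n/2\rfloor$ only when $t$ and $n$ are both even; it is $\lceil n/2\rceil$ when $n$ is odd, and $n/2+1$ when $n$ is even and $t$ is odd. The smallest failure is already $n=3$, $t=1$: your procedure uses $2$ auxiliary variables where the claim is $1$. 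Moreover the defect cannot be repaired inside this framework: since the representation of any quadratic at half-integer breakpoints is unique, the identically-zero corrections are spanned by $E'$, $E''$ and the representation of the constant $1$, so the only admissible liftings are uniform shifts of the two parity classes, and the counts above are optimal among them.

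The statement itself is nevertheless true, but it needs an idea that appears in neither your proposal nor the paper's proof: the breakpoints must be allowed to sit at arbitrary positions, not only at half-integers. Concretely, keep the quadratic part $\frac{l(l-1)}{2}$ and interpolate $f^=_{t,n}-\frac{l(l-1)}{2}$ (as a function of the weight $l$) by a concave piecewise-linear function; its slope is forced to drop at every $l\in\{1,\dots,n-1\}$ except $l=t\pm 1$, and whenever two forced drops occur at \emph{adjacent} integers, a single breakpoint placed strictly between them (at the crossing of the two chord lines) absorbs both, hence costs one auxiliary variable instead of two. Counting the runs of forced drops gives $\lceil\frac{t-2}{2}\rceil+1+\lceil\frac{n-t-2}{2}\rceil$ auxiliary variables for $2\le t\le n-2$, which is at most $\lfloor n/2\rfloor$ in every parity case (the boundary cases $t\le 1$ and $t\ge n-1$ are checked directly). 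So: your representation is right, your suspicion that some parity combination forces extra surviving coefficients was exactly on target --- it dooms both your argument and, as written, the paper's --- and closing the gap requires leaving the half-integer-breakpoint framework rather than refining the parity count within it.
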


The positive monomial and the $\textrm{And}_n$ Boolean function are also special cases of exact-$t$ 
functions, both with $t=n$. It is apparent from the argument leading to Corollary~\ref{thm:exact-k}
that the reason the positive monomial (and hence, the $\textrm{And}_n$ function) requires 
$\left\lfloor \frac{n-1}{2} \right\rfloor$ auxiliary variables instead of $\left\lfloor n/2 \right\rfloor$
is precisely because $t=n$.

\subsection{Parity and its complement} 
The \emph{parity} function is the (pseudo-)Boolean function $\Pi(x)$ such that $\Pi(x)=1$ if the Hamming weight 
of $x$ is odd, and $\Pi(x)=0$ otherwise. To derive a quadratization of this function, we will use 
Corollary~\ref{thm:fix} rather than Corollary~\ref{thm:rep2}, and will make use of a variant of the argument 
given to establish Theorem~\ref{thm:symm_quad}. By Corollary~\ref{thm:fix}, we can see that $\Pi$ has the 
representation 
\begin{equation}\label{parity} 
  \Pi(x) = \sum_{j=1}^n x_j+2\sum_{i=1}^{n-1}(-1)^{i-1}\n{i-\sum_{j=1}^n x_j}.
\end{equation} 

Let $E(l)$ be as in Lemma~\ref{lem:zero1}. By adding $E(\sum_{j=1}^n x_j)$ to this representation of $\Pi$, 
we obtain a representation with non-negative coefficients, which leads to a quadratization with 
$m=\left\lfloor n/2\right\rfloor$ auxiliary variables: $\Pi(x)=\min_{y \in \{0,1\}^m} g(x,y)$ where
\[
  g(x,y)=2 \sum_{i<j}x_ix_j+ \sum_{j=1}^n x_j + 4\,\sum_{\stackrel{i=1:}{i \;\text{odd}}}^{n-1} y_i \left(i-\sum_{j=1}^n x_j\right).
\]
(The terms with coefficient $-2$ in the expansion (\ref{parity}) disappear on the addition of $E$.)

The complement, $\overline{\Pi}$ of $\Pi$ can be represented as 
\[
  \overline{\Pi}(x)= 1-\sum_{j=1}^n x_j +2\sum_{i=1}^{n-1} (-1)^{i} \n{i -\sum_{j=1}^n x_j},
\]
so, by adding $E(\sum_{j=1}^n x_j)$, to eliminate negative coefficients, we arrive at the following 
quadratization involving $m=\left\lfloor \frac{n-1}{2}\right\rfloor$ auxiliary variables: 
\[
  g'(x,y)=1+2 \sum_{i<j}x_ix_j -\sum_{j=1}^n x_j +4\,\sum_{\stackrel{i=2:}{i \;\text{even}}}^{n-1} y_i \left(i-\sum_{j=1}^n x_j\right).
\]
So we conclude the following: 

\begin{theorem} 
  The parity function of $n$ variables has a $y$-linear quadratization involving $\left\lfloor n/2\right\rfloor$ 
  auxiliary variables, and its complement has a $y$-linear quadratization involving $\left\lfloor \frac{n-1}{2}\right\rfloor$ 
  auxiliary variables.
\end{theorem}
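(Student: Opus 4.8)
The plan is to reuse, almost verbatim, the strategy from the proof of Theorem~\ref{thm:symm_quad}: start from a representation of the target function as a linear combination of terms $\n{i-\sum_j x_j}$, add an identically-zero quantity chosen to annihilate all the negative coefficients, and then quadratize each surviving term $\beta_i\n{i-\sum_j x_j}$ with $\beta_i\ge 0$ by writing it as $\min_{y_i}\beta_i y_i\left(i-\sum_j x_j\right)$. The only change is that here I would work from the representation of Corollary~\ref{thm:fix} rather than Corollary~\ref{thm:rep2}, and correspondingly use the identically-zero function $E$ of Lemma~\ref{lem:zero1} in place of $E'$ and $E''$.

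First I would record the two representations. For the parity function one has $k_l=1$ when $l$ is odd and $k_l=0$ when $l$ is even, so the coefficient $-k_{i-1}+2k_i-k_{i+1}$ in Corollary~\ref{thm:fix} equals $+2$ for odd $i$ and $-2$ for even $i$, while the affine part $k_0+l(k_1-k_0)$ equals $\sum_j x_j$; this reproduces exactly the representation displayed in equation~(\ref{parity}). The representation of $\overline{\Pi}$ then follows either by the same computation with $k_l$ replaced by $1-k_l$, or simply by subtracting~(\ref{parity}) from the constant $1$, giving the coefficient $-2$ for odd $i$, $+2$ for even $i$, and affine part $1-\sum_j x_j$.

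Next I would add $E\!\left(\sum_j x_j\right)=\left(\sum_j x_j\right)\!\left(\sum_j x_j-1\right)+2\sum_{i=1}^{n-1}\n{i-\sum_j x_j}$ to each representation. Since $E\equiv 0$ by Lemma~\ref{lem:zero1}, the function is unchanged, but every coefficient of a term $\n{i-\sum_j x_j}$ is shifted by $+2$: for $\Pi$ the even-$i$ coefficients move from $-2$ to $0$ and the odd-$i$ coefficients from $+2$ to $+4$, and for $\overline{\Pi}$ the parities are interchanged. Using $x_j^2=x_j$, the quadratic piece $\left(\sum_j x_j\right)\!\left(\sum_j x_j-1\right)$ of $E$ simplifies to $2\sum_{i<j}x_ix_j$, which I fold into the quadratic part of the final expression. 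After this step all coefficients are non-negative and precisely the positively-weighted terms survive: the odd indices $i\in\{1,\dots,n-1\}$ for $\Pi$ and the even indices for $\overline{\Pi}$.

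Finally I would quadratize and count. Replacing each surviving $4\n{i-\sum_j x_j}$ by $\min_{y_i}4y_i\!\left(i-\sum_j x_j\right)$ yields $y$-linear quadratic polynomials $g$ and $g'$ (no products of auxiliary variables appear), with $\Pi(x)=\min_y g(x,y)$ and $\overline{\Pi}(x)=\min_y g'(x,y)$; these are exactly the expressions already displayed. The number of auxiliary variables equals the number of surviving indices: the odd integers in $\{1,\dots,n-1\}$, of which there are $\left\lfloor n/2\right\rfloor$, for $\Pi$, and the even integers in $\{1,\dots,n-1\}$, of which there are $\left\lfloor\frac{n-1}{2}\right\rfloor$, for $\overline{\Pi}$. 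I do not expect a genuine obstacle, since the representation theorem and the identity $E\equiv 0$ carry all the real weight; the only point demanding care is the parity bookkeeping—checking that adding $E$ kills exactly the negative coefficients and then correctly matching the counts of odd versus even indices in $\{1,\dots,n-1\}$ to the two floor functions $\left\lfloor n/2\right\rfloor$ and $\left\lfloor\frac{n-1}{2}\right\rfloor$.
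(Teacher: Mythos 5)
Your proposal is correct and follows essentially the same route as the paper: the paper likewise derives the representation~(\ref{parity}) of $\Pi$ (and the analogous one for $\overline{\Pi}$) from Corollary~\ref{thm:fix}, adds the identically-zero function $E\left(\sum_{j=1}^n x_j\right)$ from Lemma~\ref{lem:zero1} to cancel the coefficient $-2$ terms, and reads off the same $y$-linear quadratizations $g$ and $g'$ with the same counts $\left\lfloor n/2\right\rfloor$ and $\left\lfloor \frac{n-1}{2}\right\rfloor$. Your parity bookkeeping (odd versus even indices in $\{1,\dots,n-1\}$) and the simplification $l(l-1)=2\sum_{i<j}x_ix_j$ are both accurate, so nothing is missing.
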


%%%%%%%%
\section{Lower bounds on the number of auxiliary variables}\label{sec:lower}
\subsection{Generic lower bounds}
The following result is inspired by (but is different and does not follow from) a transformation given in Siu, 
Roychowdhury and Kailath~\cite{Siu95}, in the framework of the representation of Boolean functions by threshold 
circuits. This result relates quadratizations of arbitrary (possibly non-symmetric) pseudo-Boolean functions to 
the quadratization of symmetric functions on a larger, related, number of variables. We will then use a lower 
bound result from~\cite{ABCG} in order to obtain a lower bound result for symmetric functions. 

\begin{lemma}~\label{lem:arbtosym}
  Suppose that $n,m $ are positive integers and suppose that every symmetric pseudo-Boolean function 
  $F(z)$ of $N=2^n-1$ variables (that is, every symmetric function $F:\{0,1\}^{2^n-1}\to \bbr$) has an 
  $m$-quadratization. Then every  (arbitrary) pseudo-Boolean function $f(x)$ on $\{0,1\}^n$ also has an 
  $m$-quadratization.
\end{lemma}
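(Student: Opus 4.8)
The plan is to associate to each pseudo-Boolean function $f$ on $\{0,1\}^n$ a symmetric function $F$ on $N=2^n-1$ variables by a variable-duplication (``binary encoding'') device, to quadratize $F$ by hypothesis, and then to recover a quadratization of $f$ by a linear substitution of the $z$-variables. The guiding idea is that a symmetric function ``sees'' only the Hamming weight of its argument, so if I can arrange for the $2^n$ possible inputs $x\in\{0,1\}^n$ to be encoded as $2^n$ \emph{distinct} attainable weights, then a single symmetric function on $N$ variables can carry all the information of $f$.

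Concretely, I would partition the variables $z_1,\ldots,z_N$ into $n$ blocks $B_1,\ldots,B_n$ with $|B_j|=2^{\,j-1}$, which is consistent since $\sum_{j=1}^n 2^{\,j-1}=2^n-1=N$. The choice of geometric block sizes is the crux: the map $x\mapsto \sum_{j=1}^n 2^{\,j-1}x_j$ is a bijection from $\{0,1\}^n$ onto $\{0,1,\ldots,2^n-1\}$ (binary representation), and this is precisely the set of Hamming weights attainable by $z\in\{0,1\}^N$. This lets me define a symmetric function $F$ by declaring its value at weight $w$ to be $k_w:=f(b(w))$, where $b(w)\in\{0,1\}^n$ is the $n$-bit binary expansion of $w$; it is well-defined at \emph{every} weight exactly because $w$ ranges over $\{0,\ldots,2^n-1\}$, and this is what forces the value $N=2^n-1$. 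By the hypothesis of the lemma, $F$ admits an $m$-quadratization $G(z,y)$, so that $F(z)=\min_y G(z,y)$ for all $z\in\{0,1\}^N$.

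I would then substitute $z_i:=x_j$ for every $i\in B_j$, obtaining $g(x,y):=G(z,y)\big|_{z_i=x_j\,(i\in B_j)}$. Because this substitution replaces each $z$-variable by a single $x$-variable, it sends the quadratic polynomial $G$ to a polynomial $g$ that is again quadratic in $(x,y)$: a product $z_iz_{i'}$ becomes $x_jx_{j'}$ (or $x_j^2$), a product $z_iy_k$ becomes $x_jy_k$, and the products $y_ky_\ell$ are left untouched. For any fixed $x\in\{0,1\}^n$ the substituted point $z(x)$ has weight $\sum_j 2^{\,j-1}x_j=w$, whence $F(z(x))=k_w=f(x)$; evaluating the quadratization identity at the single point $z(x)$ gives $f(x)=F(z(x))=\min_y G(z(x),y)=\min_y g(x,y)$, so $g$ is an $m$-quadratization of $f$. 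The only genuinely substantive step is the symmetrization itself, namely seeing that the weights $2^{\,j-1}$ make the weight map a bijection onto $\{0,\ldots,N\}$ and hence make $F$ well-defined everywhere; the remainder (that a linear substitution preserves degree at most two, and that the minimum over $y$ commutes with fixing $z=z(x)$ because the quadratization identity holds pointwise on $\{0,1\}^N$) is routine.
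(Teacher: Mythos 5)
Your proposal is correct and is essentially identical to the paper's own proof: both define the weight function $k(w)=f(b(w))$ via binary encoding, lift it to a symmetric function $F$ on $N=2^n-1$ variables, invoke the hypothesis to obtain an $m$-quadratization $G(z,y)$, and then identify each block of $2^{j-1}$ of the $z$-variables with $x_j$ to produce the quadratization $g(x,y)=G(z(x),y)$ of $f$. The verification chain $\min_y g(x,y)=\min_y G(z(x),y)=F(z(x))=k\bigl(\sum_j 2^{j-1}x_j\bigr)=f(x)$ matches the paper's argument step for step.
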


\begin{proof}
  Let $f(x)$ be an arbitrary pseudo-Boolean function of $n$ variables. We are going to construct a sequence of 
  four functions $k$, $F$, $G$, $g$, such that $g$ is a quadratization of $f$. For this purpose, let $N=2^{n}-1$.
  \begin{enumerate}
  \item 
    Let $k:\{0,1,\ldots, N\} \to \bbr$ be defined as follows: $k(w) := f(x)$ where $x$ is the binary representation 
    of $w$, that is, $w=\sum_{i=1}^n 2^{i-1} x_i$.

  \item 
    Let $F$ be the symmetric pseudo-Boolean function of $N$ variables defined by: for all $z \in \{0,1\}^N$, 
    $F(z) := k(|z|)$, where $|z|$ is the Hamming weight of $z$. (This defines $F$ completely, given that it is 
    symmetric.) 

  \item 
    Let $G(z,y)$ be an arbitrary quadratization of $F(z)$ using $m$ auxiliary variables. (The hypothesis of 
    the theorem is that such quadratizations exist.) 

  \item 
    Finally, let $g(x,y)$ be the pseudo-Boolean function on $\{0,1\}^{n+m}$ that is obtained by identifying each of the 
    variables $z_{2^{j-1}}, z_{2^{j-1}+1},\ldots,z_{2^{j}-1}$ with $x_{j}$ in $G(z,y)$, for $j=1,2,\ldots,n$; that is,
    \[
      g(x_1,x_2,x_3,\ldots,x_n,y) := G(x_1,x_2,x_2,x_3,x_3,x_3,x_3,\ldots,x_n,\ldots,x_n,y).
    \]
    (The unification makes sense since $2^{j-1}x_j = z_{2^{j-1}} + z_{2^{j-1}+1} +\cdots + z_{2^{j}-1}$, for all 
    $j=1,2,\ldots,n$.) 
  \end{enumerate}
  We claim that $g(x,y)$ is a quadratization of $f$. Indeed, $g$ is clearly quadratic, because $G$ is. Moreover, 
  for every point $x \in \{0,1\}^n$,
  \begin{align}
    &\hspace*{-1truecm}\min \bigl\{ g(x,y) : y \in \{0,1\}^m \bigr\} \nonumber\\
    &=  \min \bigl\{ G(x_1,x_2,x_2,x_3,x_3,x_3,x_3,\ldots,x_n,y) : y \in \{0,1\}^m \bigr\} \label{eq:g1}\\
    &=  F(x_1,x_2,x_2,x_3,x_3,x_3,x_3,\ldots,x_n) \label{eq:g2}\\
    &=  k\left(\sum_{i=1}^n 2^{i-1} x_i\right) \label{eq:g3}\\
    &=  f(x) \label{eq:g4}
  \end{align}
  (where equality \eqref{eq:g1} is by definition of $g$, \eqref{eq:g2} is by definition of $G$, \eqref{eq:g3} is 
  by definition of $F$, and \eqref{eq:g4} is by definition of $k$).
\end{proof}

We will now make use of the following result from~\cite{ABCG}. 

\begin{theorem}\label{thm:lowergeneral} 
  There are pseudo-Boolean functions of $n$ variables for which any quadratization must involve at least 
  $\Omega(2^{n/2})$ auxiliary variables. 
\end{theorem}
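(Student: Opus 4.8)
The plan is to prove the bound by a parameter-counting (dimension) argument carried out in the $2^n$-dimensional real vector space of all pseudo-Boolean functions on $\{0,1\}^n$. The starting observation is that a quadratization $g(x,y)$ with $m$ auxiliary variables is a quadratic polynomial in the $n+m$ variables $(x,y)$, so after reducing $x_i^2,y_j^2$ to $x_i,y_j$ on the Boolean cube it is specified by at most $D := 1 + (n+m) + \binom{n+m}{2} = O\bigl((n+m)^2\bigr)$ real coefficients. The quadratized function is recovered as $f(x)=\min\{g(x,y):y\in\{0,1\}^m\}$, so the whole question becomes: how large a subset of $\mathbb{R}^{2^n}$ can this $D$-parameter family sweep out as $g$ ranges over all quadratic polynomials?

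The key step is to control the image of the (nonlinear, because of the minimization) map $g\mapsto f$ by linearizing it piecewise. First I would introduce, for each \emph{selection function} $\sigma:\{0,1\}^n\to\{0,1\}^m$, the set of coefficient vectors of $g$ for which the minimum over $y$ is attained at $y=\sigma(x)$ for every $x$. On this set one has $f(x)=g(x,\sigma(x))$ for all $x$, and since $g$ depends linearly on its coefficients while $x$ and $\sigma(x)$ are fixed, the value vector $\bigl(f(x)\bigr)_{x\in\{0,1\}^n}$ is a \emph{linear} image of the coefficient vector of $g$; hence the functions arising from a single $\sigma$ lie in a linear subspace of $\mathbb{R}^{2^n}$ of dimension at most $D$. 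There are only finitely many selection functions (at most $(2^m)^{2^n}$ of them), so the set $S_m\subseteq\mathbb{R}^{2^n}$ of all functions admitting an $m$-quadratization is a \emph{finite} union of pieces each of dimension at most $D$. Consequently $\dim S_m\le D=O\bigl((n+m)^2\bigr)$, and in particular $S_m$ has Lebesgue measure zero in $\mathbb{R}^{2^n}$ whenever $D<2^n$.

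To finish, I would choose $m=\lfloor c\,2^{n/2}\rfloor$ with a small enough constant $c>0$ so that $D=O((n+m)^2)<2^n$ for all large $n$. Then $S_m$ is a proper, measure-zero subset of $\mathbb{R}^{2^n}$, so a generic pseudo-Boolean function $f$ on $\{0,1\}^n$ lies outside $S_m$; such an $f$ admits no quadratization with $m$ or fewer auxiliary variables, and therefore requires at least $m+1=\Omega(2^{n/2})$ of them. This simultaneously establishes existence and fixes the exponent, each step being tight up to constants.

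I expect the main obstacle to be the bookkeeping behind the assertion that the image is a finite union of low-dimensional pieces, that is, making rigorous that the minimization cannot inflate the dimension. The cleanest route is the selection-function decomposition above, where the crucial point is not that the number of pieces is small (it is astronomically large) but merely that it is \emph{finite}, so that a finite union of measure-zero sets remains measure-zero. One should also note that ties in the minimization only shrink each piece and so cannot enlarge $S_m$, and that the min over the finite set $\{0,1\}^m$ is always attained, so $f$ is genuinely well defined on every piece.
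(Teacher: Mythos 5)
Your proof is correct, but note a peculiarity of the source: this paper never proves Theorem~\ref{thm:lowergeneral} itself --- it is imported from the companion paper~\cite{ABCG} (``We will now make use of the following result from~\cite{ABCG}''), so there is no internal proof to compare against. Your argument is precisely the dimension-counting proof that underlies the cited result: a quadratization with $m$ auxiliary variables has at most $D = 1 + (n+m) + \binom{n+m}{2}$ coefficients; for each selection function $\sigma:\{0,1\}^n\to\{0,1\}^m$ the resulting value vectors $\bigl(g(x,\sigma(x))\bigr)_{x}$ form a linear image of the coefficient space, hence lie in a subspace of dimension at most $D$; and a finite union of proper subspaces cannot cover $\mathbb{R}^{2^n}$ once $D < 2^n$, which holds with $m = \lfloor c\,2^{n/2}\rfloor$ for a small enough constant $c$ (the explicit constant $2^{n/2}/8$ for $n \ge 8$ quoted in the paper comes from exactly this kind of bookkeeping). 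Two minor points you should make explicit: first, to conclude that a function outside $S_m$ needs \emph{more than} $m$ auxiliary variables, you need the monotonicity $S_{m'} \subseteq S_m$ for $m' \le m$, which is immediate since unused auxiliary variables can always be appended to a quadratization; second, each piece of your decomposition is contained in (rather than equal to) the image of the linear map $g \mapsto \bigl(g(x,\sigma(x))\bigr)_x$, which is all the containment argument requires. Neither point affects the validity of the proof.
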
 

To be more concrete, the analysis in~\cite{ABCG} implies that  for any $n \ge 8$, there is a pseudo-Boolean 
function on $n$ variables for which any quadratization will require at least $2^{n/2}/8$ auxiliary variables. 

This leads to the following lower bound result for symmetric functions. 

\begin{theorem}\label{thm:lb_sym1}
  There exist symmetric functions of $n$ variables for which any quadratization must involve
  at least $\Omega(\sqrt{n})$ auxiliary variables.
\end{theorem}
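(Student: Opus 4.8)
The plan is to argue by contraposition, pushing the strong lower bound for \emph{arbitrary} pseudo-Boolean functions (Theorem~\ref{thm:lowergeneral}) through the reduction of Lemma~\ref{lem:arbtosym}. That lemma says that if every symmetric function on $N = 2^n-1$ variables admits an $m$-quadratization, then so does every arbitrary function on $n$ variables. Reading this contrapositively: if some arbitrary function on $n$ variables has \emph{no} $m$-quadratization, then some symmetric function on $N = 2^n-1$ variables also has no $m$-quadratization. Thus a hard arbitrary function on $n$ variables immediately produces a hard symmetric function on exponentially many more variables, and the whole argument reduces to tracking the parameter transformation.

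Concretely, I would fix $n \ge 8$ and invoke the explicit form of Theorem~\ref{thm:lowergeneral}: there is an arbitrary function on $n$ variables requiring at least $2^{n/2}/8$ auxiliary variables, i.e. admitting no $m$-quadratization for $m = \lceil 2^{n/2}/8 \rceil - 1$. By the contrapositive of Lemma~\ref{lem:arbtosym}, there is then a symmetric function $F$ on $N = 2^n-1$ variables with no $m$-quadratization, hence requiring at least $2^{n/2}/8$ auxiliary variables. Since $2^n = N+1$, we have $2^{n/2} = \sqrt{N+1}$, so $F$ is a symmetric function on $N$ variables needing at least $\sqrt{N+1}/8 = \Omega(\sqrt{N})$ auxiliary variables. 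This establishes the claim for every $N$ of the form $2^n-1$.

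To obtain the statement for an arbitrary number of variables, I would add a short padding step. Given any $N'$, choose the largest $n$ with $2^n-1 \le N'$, so that $N := 2^n-1 = \Theta(N')$, and let $F$ be the hard symmetric function on $N$ variables produced above. Extend $F$ to a symmetric function $\tilde F$ on $N'$ variables by setting $\tilde k_l = k_l$ for $l \le N$ and choosing $\tilde k_l$ arbitrarily for $l > N$. Restricting any $m$-quadratization $g(x_1,\ldots,x_{N'},y)$ of $\tilde F$ by fixing $x_{N+1} = \cdots = x_{N'} = 0$ leaves a quadratic function of $(x_1,\ldots,x_N,y)$ whose minimum over $y$ equals $\tilde F(x_1,\ldots,x_N,0,\ldots,0) = F(x_1,\ldots,x_N)$; hence it is a quadratization of $F$ using the same $m$ auxiliary variables. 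Therefore $\tilde F$ inherits the lower bound, and since $N = \Theta(N')$ we obtain $\Omega(\sqrt{N}) = \Omega(\sqrt{N'})$, as required.

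The argument contains no genuinely difficult step; the only point demanding care is the parameter accounting, namely verifying that the exponential blow-up $N = 2^n-1$ in Lemma~\ref{lem:arbtosym} exactly converts the $\Omega(2^{n/2})$ bound of Theorem~\ref{thm:lowergeneral} into an $\Omega(\sqrt{N})$ bound, and checking that the restriction-to-zero padding preserves quadratizations, so that the lower bound extends from the sparse values $N = 2^n-1$ to all numbers of variables.
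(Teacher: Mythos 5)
Your proof is correct and takes essentially the same route as the paper: the contrapositive of Lemma~\ref{lem:arbtosym} combined with the explicit $2^{n/2}/8$ bound of Theorem~\ref{thm:lowergeneral}, with the substitution $N = 2^n - 1$ converting $\Omega(2^{n/2})$ into $\Omega(\sqrt{N})$. Your additional padding argument, which extends the bound from values $N$ of the form $2^n-1$ to all numbers of variables, is a valid refinement of a point the paper leaves implicit.
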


\begin{proof}
  Lemma~\ref{lem:arbtosym} shows that, if every symmetric function $F(z)$ on $\{0,1\}^N$, with $N=2^n-1$, has 
  an $m$-quadratization, then every (arbitrary) function $f(x)$ on $\{0,1\}^n$ also has an $m$-quadratization. 
  On the other hand, from Theorem~\ref{thm:lowergeneral}, we know that some pseudo-Boolean functions on $n$ 
  variables require $\Omega(2^{n/2})$ auxiliary variables. It follows that some symmetric functions on $N$ 
  variables must need $\Omega(\sqrt{N})$ auxiliary variables in every quadratization.
\end{proof}

We also have a similar lower bound result for $y$-linear quadratizations. It rests on the following result 
from~\cite{ABCG}: 

\begin{theorem}\label{thm:lowerbound2}
  There are pseudo-Boolean functions of $n$ variables for which any $y$-linear quadratization 
  must involve at least $\Omega(2^n/n)$ auxiliary variables.
\end{theorem}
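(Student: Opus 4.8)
The plan is to argue by a dimension count. I would show that the set of pseudo-Boolean functions on $n$ variables admitting a $y$-linear quadratization with at most $m$ auxiliary variables forms a set of small dimension inside $\bbr^{2^n}$, so that when $m$ is too small this set cannot be all of $\bbr^{2^n}$, and hence a suitable $f$ (in fact almost every $f$) must lie outside it.

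First I would record the explicit form of a $y$-linear quadratization. As noted in the introduction, if $g(x,y)=q(x)+\sum_{i=1}^m a_i(x)y_i$ with $q$ quadratic in $x$ and each $a_i$ affine in $x$, then minimizing over $y\in\{0,1\}^m$ replaces each product $a_i(x)y_i$ by $\n{a_i(x)}$, so that
\[
  f(x)=q(x)+\sum_{i=1}^m \n{a_i(x)} \qquad\text{for all } x\in\{0,1\}^n.
\]
Thus $f$ has a $y$-linear quadratization with $m$ auxiliary variables precisely when it can be written in this form. I would then view the collection of all such $f$ as the image of a map $\Phi_m$ defined on the parameter space of the choices of $q$ and $a_1,\dots,a_m$: on the Boolean cube a quadratic $q$ is determined by $D:=1+n+\binom{n}{2}$ coefficients, and each affine $a_i$ by $n+1$ coefficients, so the parameter space has dimension $N=D+m(n+1)$. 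Evaluating $f$ at all $2^n$ points gives a map $\Phi_m:\bbr^{N}\to\bbr^{2^n}$ whose image is exactly the set of functions quadratizable with at most $m$ auxiliary variables (using fewer variables is a special case, obtained by setting some $a_i$ to a positive constant so that its term vanishes on the cube).

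The key step is to bound the dimension of the image of $\Phi_m$. Each coordinate $f(x)$ of $\Phi_m$ is piecewise-linear in the parameters, the only nonlinearity coming from the terms $\n{a_i(x)}=\min(a_i(x),0)$. The sign conditions ``$a_i(x)\le 0$'', taken over all $i\in\{1,\dots,m\}$ and all $x\in\{0,1\}^n$, are finitely many linear conditions on the parameter vector and so cut $\bbr^N$ into finitely many polyhedral cells. On each cell every $\n{a_i(x)}$ is a fixed affine function of the parameters, hence $\Phi_m$ restricts to an affine map there, whose image lies in an affine subspace of dimension at most $N$. The full image is therefore a finite union of sets each of dimension at most $N$, and in particular has Lebesgue measure zero in $\bbr^{2^n}$ as soon as $N<2^n$.

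Finally I would choose $m$. Whenever $N=D+m(n+1)<2^n$, almost every $f\in\bbr^{2^n}$ fails to have a $y$-linear quadratization with $m$ auxiliary variables; taking the largest such $m$ produces a function that requires at least
\[
  \frac{2^n-1-n-\binom{n}{2}}{n+1}=\Omega\!\left(\frac{2^n}{n}\right)
\]
auxiliary variables, as claimed. I expect the main point to get right to be the dimension bound for $\Phi_m$: one must verify that the sign-pattern arrangement yields only finitely many cells and that $\Phi_m$ is genuinely affine (not merely continuous) on each cell, so that the image really is a finite union of pieces of dimension at most $N$ and cannot fill a positive-measure subset of $\bbr^{2^n}$ when $N<2^n$.
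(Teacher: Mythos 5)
Your argument is correct, but there is nothing in this paper to compare it against: Theorem~\ref{thm:lowerbound2} is stated here without proof, as a result imported from the companion paper~\cite{ABCG} (listed as ``in preparation''), so the paper contains no proof of this statement at all. Taken on its own terms, your dimension count goes through. Minimizing a $y$-linear $g$ over $y$ indeed gives $f(x)=q(x)+\sum_{i=1}^m \n{a_i(x)}$, so the functions admitting such a quadratization with at most $m$ auxiliary variables are exactly the image of your map $\Phi_m$ (padding with $a_i\equiv 0$, or a positive constant, handles quadratizations with fewer variables). The $m2^n$ conditions $a_i(x)\le 0$ are each nontrivial linear inequalities in the parameters, so they determine a finite hyperplane arrangement in $\bbr^N$; on each closed cell every term $\n{a_i(x)}$ equals either $a_i(x)$ or $0$, and the two branches agree on the boundary $a_i(x)=0$, so $\Phi_m$ coincides with an affine map on each closed cell. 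Its image is therefore contained in a finite union of affine subspaces of $\bbr^{2^n}$, each of dimension at most $N=1+n+\binom{n}{2}+m(n+1)$, and this union has Lebesgue measure zero in $\bbr^{2^n}$ whenever $N<2^n$; hence almost every $f$ needs more than $m$ auxiliary variables, and taking $m$ maximal with $N<2^n$ yields the claimed $\Omega(2^n/n)$ bound (your arithmetic here is right: a function outside the image of $\Phi_{m}$ needs at least $m+1\ge (2^n-1-n-\binom{n}{2})/(n+1)$ auxiliary variables). The two points you flagged as needing care --- finiteness of the cell decomposition and affineness (not mere continuity) of $\Phi_m$ on each cell --- both hold for the elementary reasons above. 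It is worth noting that the same style of parameter count, with $O((n+m)^2)$ coefficients for a general quadratic in $(x,y)$ and a finite union over minimizer patterns, is the natural route to the paper's other imported bound $\Omega(2^{n/2})$ (Theorem~\ref{thm:lowergeneral}); your argument is the specialization of that idea to the $y$-linear case, where the parameter count drops to $O(n^2+mn)$ and the bound correspondingly improves to $\Omega(2^n/n)$.
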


We then have the following. 

\begin{theorem}\label{thm:lb_sym2}
  There exist symmetric functions of $n$ variables for which any $y$-linear quadratization must involve
  at least $\Omega(n/\log n)$ auxiliary variables.
\end{theorem}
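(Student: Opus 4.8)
The plan is to mirror the proof of Theorem~\ref{thm:lb_sym1}, replacing the generic quadratization machinery with its $y$-linear counterpart: the two ingredients will be the reduction of Lemma~\ref{lem:arbtosym} and the $y$-linear lower bound of Theorem~\ref{thm:lowerbound2}. The crucial preliminary observation, and the only step requiring care, is that the reduction of Lemma~\ref{lem:arbtosym} preserves $y$-linearity.

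First I would revisit the construction in the proof of Lemma~\ref{lem:arbtosym}. There, starting from a quadratization $G(z,y)$ of the symmetric function $F$, one obtains $g(x,y)$ by identifying the $z$-variables in consecutive blocks with the original variables $x_1,\ldots,x_n$. Since this substitution touches only the $z$-variables and leaves the auxiliary variables $y$ untouched, it cannot create any product of two $y$-variables; in particular, if $G$ contains no such products then neither does $g$. Hence, if $G$ is $y$-linear then so is $g$. This yields a $y$-linear analogue of Lemma~\ref{lem:arbtosym}: if every symmetric function on $N=2^n-1$ variables admits a $y$-linear $m$-quadratization, then every pseudo-Boolean function on $n$ variables admits a $y$-linear $m$-quadratization.

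Next I would argue by contrapositive, invoking Theorem~\ref{thm:lowerbound2}. By that theorem, there is a function on $n$ variables whose every $y$-linear quadratization requires $\Omega(2^n/n)$ auxiliary variables. Combined with the $y$-linear reduction above, this forces some symmetric function on $N=2^n-1$ variables to require at least $\Omega(2^n/n)$ auxiliary variables in every $y$-linear quadratization.

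Finally I would translate the bound into the variable count $N$. Since $N=2^n-1$, we have $2^n=N+1$ and $n=\log_2(N+1)$, so $\Omega(2^n/n)=\Omega(N/\log N)$; renaming $N$ as $n$ gives the stated conclusion. The only real obstacle is the first step, namely confirming that block-identification of the $z$-variables cannot introduce products of auxiliary variables, and this is immediate from the form of the substitution. The remainder is routine bookkeeping with the parameter $N=2^n-1$, exactly paralleling the passage from Theorem~\ref{thm:lowergeneral} to Theorem~\ref{thm:lb_sym1}.
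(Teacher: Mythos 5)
Your proposal is correct and follows essentially the same route as the paper: the paper's proof likewise observes that the substitution in Lemma~\ref{lem:arbtosym} preserves $y$-linearity of $G$, and then invokes the $\Omega(2^n/n)$ bound of Theorem~\ref{thm:lowerbound2}, with the same change of parameter $N=2^n-1$ giving $\Omega(N/\log N)$. Your write-up merely spells out the details that the paper leaves implicit.
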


\begin{proof}
  The proof is similar to the previous one: it suffices to observe that when $G(z,y)$ is $y$-linear, then so 
  is $g(x,y)$, and to rely on the generic lower bound $\Omega(2^n/n)$ of Theorem~\ref{thm:lowerbound2} for the 
  number of auxiliary variables required in every $y$-linear quadratization of certain pseudo-Boolean functions. 
\end{proof}

Note that the lower bound in Theorem~\ref{thm:lb_sym2} for the number of auxiliary variables in $y$-linear 
quadratizations comes within a factor $O(\log n)$ of the upper bound of $n-2$ from Theorem~\ref{thm:upperlinear}. 

\subsection{A lower bound for the parity function}
The results just obtained prove the existence of symmetric pseudo-Boolean functions which require a significant 
number of auxiliary variables to quadratize. Specifically, there exist functions needing $\Omega(\sqrt{n})$ auxiliary 
variables in any quadratization, and functions needing $\Omega(n/\log n)$ auxiliary variables in any $y$-linear 
quadratization. Those results do not, however, explicitly exhibit particular such functions. We next give a concrete 
example of a function which needs a significant number of auxiliary variables in any $y$-linear quadratization. 

\begin{theorem}\label{thm:lb_parity}
  Every $y$-linear quadratization of the parity function on $n$ variables must involve
  at least $\Omega(\sqrt{n})$ auxiliary variables.
\end{theorem}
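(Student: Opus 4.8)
The starting point is the observation from the introduction that a $y$-linear quadratization of $\Pi$ with $m$ auxiliary variables is the same thing as an identity
\[
  \Pi(x)=q(x)+\sum_{i=1}^m \n{a_i(x)}\qquad (x\in\{0,1\}^n),
\]
where $q$ is quadratic in $x$ and each $a_i(x)=c_i+\sum_{j=1}^n b_{ij}x_j$ is affine, $\n{a_i(x)}=\min\{0,a_i(x)\}$ being the value of $\min_{y_i} a_i(x)y_i$. The point I would stress is that each summand $\n{a_i(x)}$ is a \emph{ridge} function: it depends on $x$ only through the single linear form $a_i(x)$, and it is concave and piecewise linear with a single breakpoint hyperplane $\{a_i=0\}$. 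So a lower bound on $m$ is a lower bound on the number of ridge terms needed to turn the quadratic $q$ into the degree-$n$, highly oscillatory parity polynomial.

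The plan is to first eliminate $q$, then exploit the oscillation of $\Pi$. Applying a discrete second-order difference operator $\Delta_{jk}$ in two coordinates $x_j,x_k$ annihilates $q$ down to the constant coefficient of $x_jx_k$, while $\Delta_{jk}\Pi$ equals, up to sign, twice the parity function on the remaining $n-2$ variables (its value is $\pm 2$ according to the parity of the other coordinates). Each term becomes $\Delta_{jk}\n{a_i}$, the mixed second difference of $t\mapsto\n{t}$ in the directions $b_{ij},b_{ik}$: a bounded bump, nonzero only on the slab where $a_i$ is within $O(|b_{ij}|+|b_{ik}|)$ of its breakpoint. This reduces the claim to representing a $\pm2$-valued, fully oscillating function on $\{0,1\}^{n-2}$ as a constant plus a sum of $m$ such ridge bumps (and, iterating the differencing, as a sum of $m$ ridge terms each supported near a single hyperplane).

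The combinatorial heart is to convert this into a bound of the form $m^2=\Omega(n)$. I would argue through the arrangement of the $m$ breakpoint hyperplanes $\{a_i=0\}$: on each cell of the arrangement every $\n{a_i}$ is affine, so $\Pi$ agrees there with a single quadratic polynomial $q+(\text{affine})$. Parity cannot coincide with one quadratic on too rich a set of vertices, so a maximally oscillating configuration of vertices must be split among many cells; estimating how finely $m$ hyperplanes can split such a configuration — essentially a two-dimensional (pairs-of-coordinates) arrangement count — is what I expect to produce the factor $\sqrt n$.

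The main obstacle is precisely this quantitative geometric step, and the reason the naive arguments fail is scale-invariance: since $\n{\lambda a}=\lambda\n{a}$ for $\lambda>0$, and $q$ is free to absorb the entire degree-$\le 2$ part of $\Pi$, the weights $b_{ij}$ are completely unconstrained, so no bound on the \emph{magnitudes} of the ridge contributions can limit $m$. The lower bound must come entirely from the \emph{sign geometry} of the breakpoint hyperplanes — how few of them can make parity piecewise-quadratic on the cube — and the delicate part is showing that reproducing the $\Theta(n)$ sign alternations of parity with $m$ hyperplanes forces $m=\Omega(\sqrt n)$ rather than merely $\Omega(\log n)$.
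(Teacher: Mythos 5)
Your setup coincides exactly with the paper's: write the $y$-linear quadratization as $g(x,y)=q(x)+\sum_{i=1}^m y_i(\ell_i(x)-b_i)$, observe that on each cell of the arrangement of the $m$ breakpoint hyperplanes $\{\ell_i=b_i\}$ the minimized function equals the single quadratic $q(x)+\sum_{i\in S}(\ell_i(x)-b_i)$, and note that parity cannot agree with a quadratic on a sufficiently rich set of cube vertices. Your diagnosis of why naive magnitude-based arguments fail (scale invariance of $\n{\cdot}$) is also sound. However, the step you label ``the main obstacle'' and ``the delicate part'' is precisely the entire content of the theorem, and you leave it unproved. The paper closes this gap with two specific ingredients: (i) parity restricted to any $3$-dimensional subcube has degree $3$, so no cell of the arrangement can contain a $3$-dimensional subcube --- equivalently, the $m$ hyperplanes must \emph{slice} every $3$-dimensional subcube of $\{0,1\}^n$; and (ii) the hypercube-slicing theorem of Saks (Proposition~3.82 of~\cite{Saks93}, see also Impagliazzo, Paturi and Saks~\cite{IPS}): any set of $m$ hyperplanes that slices all $r$-dimensional subcubes satisfies $m>\sqrt{n/(r+1)-1}$, giving $m>\sqrt{n/4-1}$ here. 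Ingredient (ii) is a genuinely nontrivial combinatorial-geometric result, not something that falls out of a routine ``arrangement count''; without it (or an equivalent substitute) your argument establishes only that the hyperplanes must slice all $3$-dimensional subcubes, which by itself yields no lower bound on $m$ beyond trivial ones.

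A secondary remark: your detour through mixed second differences $\Delta_{jk}$ is unnecessary. One does not need to annihilate $q$ analytically; the pointwise statement that parity on a $3$-dimensional subcube is not representable by any degree-$2$ polynomial (a classical fact, cf.~\cite{MinskyPapert,WangWilliams}) already rules out any cell containing such a subcube, with $q$ absorbed into the quadratic on that cell. The differencing reduction neither simplifies this step nor helps with the missing slicing bound, so as written it adds machinery without advancing the proof past the point where it stalls.
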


\begin{proof} 
  Let $g(x,y)$ be an arbitrary $y$-linear quadratization of the parity function. Then it can be written as 
  \begin{equation}\label{g} 
    g(x,y) = q(x) + \sum_{i=1}^m y_i (\ell_i(x) - b_i)
  \end{equation} 
  where $q(x)$ is quadratic, and $\ell_1(x),\ldots,\ell_m(x)$ are linear functions of $x$ only.

  For each $i\in [m]=\{1,2,\ldots,m\}$, consider the regions
  \[
    R^+_i= \{x \in \bbr^n : \ell_i(x) \geq b_i \}, \;\;R^-_i= \{x \in \bbr^n : \ell_i(x) \leq b_i \},
  \]
  which are closed half-spaces defined by the linear functions $\ell_i$. For each $S \subseteq [m]$, let $R_S$ 
  denote the region $R_S= \bigl(\bigcap_{i \in S} R^-_i \bigr) \cap \bigl( \bigcap_{i \not\in S} R^+_i \bigr)$. 
  This is one of the `cells' into which the $m$ hyperplanes defining the linear functions $\ell_i$ partition $\bbr^n$. 

  On every cell $R_S$, the function $f(x)=\min \{ g(x,y) : y \in \{0,1\}^m \}$ is quadratic. Indeed, on $R(S)$, 
  we have
  \[
    \min \bigl\{ g(x,y) : y \in \{0,1\}^m \bigr\} = q(x) + \sum_{i \in S} (\ell_i(x) - b_i).
  \]
  We now use a result from Saks~\cite{Saks93} and Impagliazzo, Paturi and Saks~\cite{IPS} (which was used to 
  obtain lower bounds on the size of threshold circuits representing the parity function). Let us say that a 
  set of hyperplanes {\em slices} all $r$-dimensional subcubes of the Boolean hypercube $\{0,1\}^n$ if for each 
  subcube (or face) of $\{0,1\}^n$ of dimension $r$, there are two vertices of the subcube that lie on opposite 
  sides of one of these hyperplanes. Then (Proposition~3.82 of~\cite{Saks93}), if a set of $m$ hyperplanes slices 
  all $r$-dimensional subcubes, we have $m>\sqrt{n/(r+1)-1}$. In particular, therefore, any set of hyperplanes 
  that slices every $3$-dimensional subcube of $\{0,1\}^n$ must contain more than $\sqrt{n/4 -1}$ planes. 
  Suppose the hyperplanes defined by the linear functions $\ell_i$ do {\em not} slice all $3$-dimensional subcubes. 
  Then there would be some cell $R_S$ containing a subcube of dimension $3$. The parity function restricted to that 
  subcube would then be equal to the quadratic expression $q(x)+\sum_{i \in S}(\ell_i(x)-b_i)$. However, it is 
  well-known (see, for instance~\cite{Saks93,MinskyPapert,WangWilliams}) that the parity function on a subcube 
  of dimension $r$ cannot be represented as a pseudo-Boolean function of degree less than $r$ (and it cannot even 
  be represented as the sign of a pseudo-Boolean function of degree less than $r$). So, we would then have a 
  quadratic, degree-$2$, representation of parity on a cube of dimension $3$, which is not possible. It follows, 
  therefore, that the set of hyperplanes in question must slice all $3$-dimensional subcubes and therefore has 
  size $m >\sqrt{n/4 -1}$.
\end{proof}

%%%%%%%%
\section{Conclusions} 
In this paper, we have studied the number of auxiliary variables required in quadratizations (and $y$-linear 
quadratizations) of symmetric pseudo-Boolean function. We have presented explicit general constructions of 
quadratizations, via special types of representations of the functions. This shows that every such function 
can be quadratized (with a $y$-linear quadratization, symmetric in the original variables) using at most $n-2$ 
auxiliary variables. We investigated in more detail the quadratizations of special functions (monomials, 
$t$-out-of-$n$, exact-$t$, and parity functions), where it was possible to obtain quadratizations using significantly 
fewer than $n-2$ auxiliary variables. By drawing on a general result from our related paper~\cite{ABCG} and 
establishing a connection between quadratizations of general functions and of symmetric functions on a related 
number of variables, we showed that there exist symmetric functions requiring $\Omega(\sqrt{n})$ auxiliary 
variables in any quadratization, and that $y$-linear quadratization can require $\Omega(n/\log n)$ variables. 
It would clearly be of interest to close the gaps between these lower bounds and the linear upper bound. We 
established, further, that any $y$-linear representation of the parity function needs $\Omega(\sqrt{n})$ 
auxiliary variables. An open question is to determine whether a similar (or better) lower bound can be 
obtained for any (not necessarily $y$-linear) quadratization of this, or another specific, symmetric function.
For instance, any example of a symmetric function where a non $y$-linear quadratization needs fewer variables 
than the $y$-linear ones would be of interest, as would be any non constant lower bound on the number of
auxiliary variables for positive monomials. Furthermore, the number of positive quadratic terms in any known 
quadratization of the positive monomial is at least $n-1$, but no lower bound on such quantity has been found 
so far. Settling this question is also of great interest, as it is related to the quality of relaxations based 
on quadratizations for PBO problems.

\vskip 0.5cm
{\bf Acknowledgements.}
We thank Gy\"{o}rgy Tur\'an for several discussions and references on Boolean circuits for 
symmetric functions.
The second author thanks the National Science Foundation (Grant IIS-1161476) for partial support. 
The third author was partially funded by the Interuniversity Attraction Poles Programme initiated by 
the Belgian Science Policy Office (grant P7/36) and by a sabbatical grant from FNRS. 
The fourth author thanks the joint CAPES (Brazil)/Fulbright (USA) fellowship process BEX-2387050/15061676 
for partial suport.


\begin{thebibliography}{00}

\bibitem{ABCG}
M. Anthony, E. Boros, Y. Crama and M. Gruber, 
Quadratic reformulations of nonlinear binary optimization problems. 
In preparation. 

\bibitem{BG2011}
E. Boros and A. Gruber,
On quadratization of pseudo-Boolean functions, 
\emph{International Symposium on Artificial Intelligence and Mathematics} (ISAIM 2012), 
Fort Lauderdale, Florida, USA, January 9-11 (2012).

\bibitem{BVZ01}
Y. Boykov, O. Veksler and R. Zabih, Fast approximate energy minimization via graph cuts, 
{\em IEEE Transactions on Pattern Analysis and Machine Intelligence} 23 (2001) 1222--1239.

\bibitem{CH2011}
Y. Crama and P.L. Hammer, 
\emph{Boolean Functions: Theory, Algorithms, and Applications}, 
Cambridge University Press, New York, N.Y., 2011. 

\bibitem{Fix} 
A. Fix, 
Reductions for rewriting QPBFs with spanning trees, 
Unpublished notes, 2011.

\bibitem{FGBZ2011}
A. Fix, A. Gruber, E. Boros and R. Zabih, 
A graph cut algorithm for higher-order Markov random fields, 
in: \emph{Proceedings of the 2011 IEEE International Conference on Computer Vision (ICCV)} 
(2011) pp.~1020--1027. 

\bibitem{FreedDrineas2005}
D. Freedman and P. Drineas, 
Energy minimization via graph cuts: Settling what is possible, 
in: {\em IEEE Conference on Computer Vision and Pattern Recognition} (2) (2005) pp.~939--946.

\bibitem{IPS} 
R. Impagliazzo, R. Paturi, M.~E. Saks, 
Size-depth tradeoffs for threshold circuits,
\emph{SIAM Journal on Computing} 26(3) (1997) 693--707. 

\bibitem{Ishikawa2011}
H. Ishikawa, 
Transformation of general binary MRF minimization to the first-order case, 
\emph{IEEE Transactions on Pattern Analysis and Machine Intelligence} 33(6) (2011) 1234--1249.

\bibitem{KR2007}
V. Kolmogorov and C. Rother, 
Minimizing non-submodular functions with graph cuts - a review, 
{\em IEEE Transactions on Pattern Analysis and Machine Intelligence} 29  (2007) 1274--1279.

\bibitem{KZ2004}
V. Kolmogorov and R. Zabih, 
What energy functions can be minimized via graph cuts? 
\emph{IEEE Transactions on Pattern Analysis and Machine Intelligence} 26(2) (2004) 147--159.

\bibitem{MinskyPapert} 
M. Minsky and S. Papert, 
\emph{Perceptrons}, 
MIT Press, Cambridge, MA., 1969. (Expanded edition 1988.)

\bibitem{RRLT2011}
S. Ramalingam, Ch. Russell, L. Ladick\'{y} and Ph.H.S. Torr, 
Efficient minimization of higher order submodular functions using monotonic Boolean functions, 
arXiv:1109.2304v1, 2011.

\bibitem{Rosenberg75}
I.G. Rosenberg,  
Reduction of bivalent maximization to the quadratic case, 
{\em Cahiers du Centre d'Etudes de Recherche Op\'{e}rationnelle} 17 (1975) 71--74.

\bibitem{RKFJ}
C. Rother, P. Kohli, W. Feng and J. Jia, 
Minimizing sparse higher order energy functions of discrete variables, 
in: {\em IEEE Conference on Computer Vision and Pattern Recognition} (2009) pp.~1382--1389. 

\bibitem{RKLS2007}
C. Rother, V. Kolmogorov, V. Lempitsky and M. Szummer, 
Optimizing binary MRFs via extended roof duality,
in: {\em IEEE Conference on Computer Vision and Pattern Recognition} (2007) pp.~1--8. 

\bibitem{Saks93}
M. Saks, 
Slicing the hypercube, 
in: K.~Walker, ed., {\em Surveys in Combinatorics}, Cambridge University Press, Cambridge, 1993, 
pp.~211--255.

\bibitem{Siu95}
K.-Y. Siu, V. Roychowdhury and T. Kailath, 
\emph{Discrete Neural Computation: A Theoretical Foundation}, 
Prentice Hall, NJ, 1995.

\bibitem{WangWilliams} 
C. Wang and A.C. Williams,
The threshold order of a Boolean function, 
{\it Discrete Applied Mathematics} 31 (1991) 51--69.

\end{thebibliography}
\end{document}